 \newtheorem{thm}{Theorem}[section]
 \newtheorem{cor}[thm]{Corollary}
 \newtheorem{lem}[thm]{Lemma}
 \newtheorem{prop}[thm]{Proposition}
 \theoremstyle{definition}
 \theoremstyle{remark}
 \newtheorem{rem}[thm]{Remark}
 \newtheorem{ex}[thm]{Example}
 \numberwithin{equation}{section}
\newcommand{\X}{\mathcal{X}}
\newcommand{\A}{ \mathcal{A}}
\newcommand{\B}{ \mathcal{B}}
\newcommand{\T}{ \mathcal{T}}
\newcommand{\M}{ \mathcal{M}}
\newcommand{\rr}{\mathcal{R}}
\newcommand{\ps}{\psi:\mathcal{A}\rightarrow \mathcal{X}}
\begin{document}

\title[Left ideal preserving maps on triangular algebras ]
 {Left ideal preserving maps on triangular algebras}

\author{Hoger Ghahramani}
\thanks{{\scriptsize
\hskip -0.4 true cm \emph{MSC(2010)}: 15A86; 16S50; 16D99; 16S99.
\newline \emph{Keywords}: left multiplier; local left multiplier; left ideal preserving; triangular algebra; generalized triangular matrix algebras; block upper triangular matrix algebras.  \\}}

\address{Department of
Mathematics, University of Kurdistan, P. O. Box 416, Sanandaj,
Iran.}

\email{h.ghahramani@uok.ac.ir; hoger.ghahramani@yahoo.com}


\address{}

\email{}

\thanks{}

\thanks{}

\subjclass{}

\keywords{}

\date{}

\dedicatory{}

\commby{}

\begin{abstract}
Let $\A$ be a unital algebra over a commutative unital ring $\rr$. We say that $\A$ is a \textbf{SLIP} algebra if every $\rr$-linear map on $\A$ that leaves invariant every left ideal of $\A$ is a left multiplier. In this paper we study whether a triangular algebra $\begin{pmatrix}
  \A & \M \\
  0 & \B
\end{pmatrix}$ is a \textbf{SLIP} algebra and give some necessary or sufficient conditions for a triangular algebra to be a \textbf{SLIP} algebra, and various examples are given which illustrate limitations on extending some of the theory developed. Then our results are applied to generalized triangular matrix algebras and block upper triangular matrix algebras. Also, some \textbf{SLIP} algebras other than triangular algebras are provided.
\end{abstract}

\maketitle
\section{ Introduction}
Throughout this article $\rr$ will denote a commutative ring with unity and all algebras are associative over $\rr$ with unity and all modules are unital, unless otherwise is specified. The unity of an algebra is denoted by $1$. Let $\A$ be an algebra and $\X$ be a right $\A$-module. Recall that an $\rr$-linear map $\ps$ is said to be a \textit{left multiplier} if $\psi(a)=\psi(1)a$ for all $a\in\A$. It is called a \textit{local left multiplier} if for any $a\in\A$ there exists an element $x_{a}\in\X$ such that $\psi(a)=x_{a}a$. Clearly, each left multiplier is a local left multiplier. The converse is, in general, not true. According to \cite{had} we say that an $\rr$-linear map $\psi:\A\rightarrow \A$ is \textbf{LIP} (left-ideal-preserving) if $\psi(\mathcal{J})\subseteq \mathcal{J}$ for any left ideal $\mathcal{J}$ of $\A$. It is easily verified that the $\rr$-liear map $\psi:\A\rightarrow \A$ is \textbf{LIP} if and only if $\psi$ is a local left multiplier. So it is clear that any left multiplier $\psi:\A\rightarrow \A$ is \textbf{LIP} map, but the converse is not necessarily true (some examples will be given). It is natural an interesting to ask for which algebras any \textbf{LIP} map is a left multiplier, so we have the next definition. The algebra $\A$ is \textbf{SLIP} provided any \textbf{LIP} map is a left multiplier (the notion \textbf{SLIP} has been used in \cite{had}).
\par 
In the case that $\rr$ is a field, to say that $\A$ is \textbf{SLIP} is the same as saying that the algebra of left multipliers on $\A$ is algebraically reflexive \cite{had1}. Reflexivity (algebraically or topologically) is an important part of operator theory, and have been studied in ring theory and Banach algebra theory by several authors. Johnson \cite{john} has shown that if $\A$ is a semisimple Banach algebra with an approximate identity and $\psi:\A\rightarrow \A$ is a bounded operator that leaves invariant all closed left ideals of $\A$, then $\psi$ is a left multiplier of $\A$. Hadwin and Li \cite{had2} have shown that Johnson’s Theorem holds for all CSL algebras. In particular Hadwin, Li and their
collaborators \cite{had3, had, had2, had4, li} have been investigating questions of this type for the past 20 years for various reflexive operator algebras. Recently Katsoulis \cite{ka} has studied the reflexivity of left multipliers over certain operator algebras. In the purely algebraic, Bre$\breve{s}$ar, $\breve{S}$emrl and others have been investigating local multipliers in various other settings \cite{br1, br2}. Also Hadwin in \cite{had} has studied various \textbf{SLIP} algebras and \textbf{SLIP} algebras (and reflexivity) in ring theory are studied in \cite{ful1,ful2,ful3, had10,had11, sn1,sn2,sn3}. On the other hand recently, there has been a growing interest in the study of preserving linear maps on triangular algebras (for example linear maps that preserve zero products, Jordan products, commutativity, etc. and derivable, Jordan derivable, Lie derivable maps at zero point, etc. for instance, see \cite{An, Ben2,liu,  Zh0, Zha} and the references therein. Motivated by the concepts and results above, we will study whether a triangular algebra is a \textbf{SLIP} algebra and give some sufficient conditions under which a triangular algebra is a \textbf{SLIP} algebra. Our results are then applied to generalized triangular matrix algebras and block upper triangular matrix algebras. Other \textbf{SLIP} algebras are also provided. 
\par 
This article is organized as follows. In section 2 some preliminaries including the introduction of triangular algebras, generalized triangular matrix algebras and block upper triangular matrix algebras, are given. In section 3 we firstly study the relation between zero product determined algebras and \textbf{SLIP} algebras. By using this relationship we provide characterizations of the \textbf{SLIP} property for several classes of algebras. Then by study the \textbf{LIP} maps on triangular algebras we obtain a necessary condition and some sufficient conditions for a triangular algebra to be a \textbf{SLIP} algebra. Also examples are given which illustrate limitations on extending some of the theory developed. In section 4 the results in previous section are applied to generalized triangular matrix algebras and block upper triangular matrix algebras. Indeed, we prove that under certain conditions the generalized triangular matrix algebras are \textbf{SLIP} algebras, and we apply these results to block upper triangular matrix algebras. 
\section{ Preliminaries}
Recall that a \emph{triangular algebra} $Tri(\A,\M,\B)$ is an
algebra of the form
\[ Tri(\A,\M,\B):=\bigg \{ \begin{pmatrix}
  a & m \\
  0 & b
\end{pmatrix}\,\bigg | \, a \in \A,\, b\in \B, \, m\in \M\bigg \} \]
under the usual matrix operations, where $\A$ and $\B$ are
unital algebras and $\M$ is an $(\A, \B)$-bimodule. The most important
examples of triangular algebras are upper triangular matrices over
an algebra $\A$, block upper triangular matrix algebras, nest
algebras over a real or a complex Banach space $\X$ or a
Hilbert space $\mathcal{H}$, respectively and generalized
triangular matrix algebras. 
\par
Let $\A$ be an algebra. Recall that an idempotent $e\in \A$ is \emph{left semicentral} in
$\A$ if $\A e=e\A e$ \cite{Bir1}. We use $\mathcal{S}_{l}(\A)$
for the sets of all left semicentral idempotents. As is well
known \cite{Ch}, a left semicentral idempotent $e$ induces a
2-by-2 triangular matrix representation on $\A$. In
particular $\A\cong Tri(e\A e, e\A (1-e), (1-e)\A (1-e))$,
where $e\A e$ and $(1-e)\A (1-e)$ are algebras over $\rr$ with
the addition and multiplication of $\A$, but different unities
($e$ and $1-e$ are the identity elements of $e\A e$ and
$(1-e)\A (1-e)$, respectively) and $e\A (1-e)$ is a unital
$(e\A e,(1-e)\A (1-e))$-bimodule. If
$\mathcal{S}_{l}(\A)=\{0, 1\}$, then we say $\A$ is
\emph{semicentral reduced}. For more information, we refer to
\cite{Bir2}.
\par 
We say $\A$ has a \emph{generalized triangular matrix
representation} if there exists a $\rr$-algebra isomorphism:
\[ \theta: \A \rightarrow \begin{pmatrix}
  \A_{11} & \A_{12} & \cdots & \A_{1n} \\
  0 & \A_{22} & \cdots & \A_{2n} \\
  \vdots & \vdots & \ddots  & \vdots \\
  0 & 0 & \cdots & \A_{nn}
\end{pmatrix},\]
where each $\A_{ii}$ is an algebra with unity and $\A_{ij}$ is a
$(\A_{ii},\A_{jj})$-bimodule for $i< j$. An ordered set
$\{e_{1},\cdots, e_{n}\}$ of nonzero distinct idempotents in $\A$
is called a set of \emph{left triangulating idempotents} of $\A$
if all the following hold:
\begin{enumerate}
\item[(i)] $1=e_{1}+\cdots+e_{n}$;
\item[(ii)] $e_{1}\in \mathcal{S}_{l}(\A)$; and
\item[(iii)] $e_{k+1} \in \mathcal{S}_{l}(f_{k} \A f_{k})$, where $f_{k}=1-(e_{1}+\cdots+e_{k})$, for $1\leq k \leq n-1$ (see \cite{Bir2}).
\end{enumerate}
\begin{prop}\label{Bi}(\cite[Proposition 1.3]{Bir2})
$\A$ has a set of left triangulating idempotents if and only if $\A$ has a generalized triangular matrix representation.
\end{prop}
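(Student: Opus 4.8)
The plan is to establish the two implications separately: the ($\Leftarrow$) direction is a computation carried out inside the matrix algebra, while the ($\Rightarrow$) direction is an induction on $n$ built on the $2\times 2$ triangular splitting induced by a left semicentral idempotent, which was recalled above in this section.

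For ($\Leftarrow$), suppose $\theta:\A\to R$ is an $\rr$-algebra isomorphism onto the displayed $n\times n$ generalized triangular matrix algebra $R$ with diagonal blocks $\A_{ii}$. Let $E_{ii}\in R$ be the matrix whose $(i,i)$-entry is the unity of $\A_{ii}$ and all of whose other entries vanish, and set $e_i:=\theta^{-1}(E_{ii})$. These are nonzero, pairwise distinct, pairwise orthogonal idempotents of $\A$ with $e_1+\cdots+e_n=\theta^{-1}(1)=1$, giving (i). For (ii) one checks directly that for every $M\in R$ the product $ME_{11}$ has all entries zero except possibly its $(1,1)$-entry, so $ME_{11}=E_{11}ME_{11}$; hence $E_{11}\in\mathcal{S}_{l}(R)$ and $e_1\in\mathcal{S}_{l}(\A)$. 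For (iii), note that $f_k=1-(e_1+\cdots+e_k)=\theta^{-1}(E_{k+1,k+1}+\cdots+E_{nn})$, so $\theta$ restricts to an $\rr$-algebra isomorphism of $f_k\A f_k$ onto the $(n-k)\times(n-k)$ generalized triangular matrix algebra formed by the blocks $\A_{ij}$ with $i,j>k$, under which $e_{k+1}$ corresponds to the top-left diagonal idempotent; the computation just used for (ii) then gives $e_{k+1}\in\mathcal{S}_{l}(f_k\A f_k)$.

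For ($\Rightarrow$), I argue by induction on $n$, the case $n=1$ being trivial. Let $\{e_1,\ldots,e_n\}$ be a set of left triangulating idempotents of $\A$ with $n\ge 2$, and write $g_k=e_1+\cdots+e_k$, $f_k=1-g_k$. Since $e_{k+1}$ lies in the corner ring $f_k\A f_k$ we have $e_{k+1}=f_ke_{k+1}f_k$, hence $g_ke_{k+1}=e_{k+1}g_k=0$ for $1\le k\le n-1$; from these identities a short induction on $k$ (at each stage multiplying $\sum_{i\le k}e_{k+1}e_i=0$ and $\sum_{i\le k}e_ie_{k+1}=0$ by the individual $e_j$ and invoking the orthogonality already obtained) yields the full orthogonality $e_ie_j=0$ for all $i\ne j$, and in particular that each $g_k$ is idempotent. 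Now $e_1\in\mathcal{S}_{l}(\A)$ gives $(1-e_1)\A e_1=0$, so the Peirce decomposition with respect to $e_1$ collapses to the $2\times 2$ generalized triangular matrix representation $\A\cong Tri(e_1\A e_1,\,e_1\A f_1,\,f_1\A f_1)$ recalled above. Using the orthogonality relations one verifies that $\{e_2,\ldots,e_n\}$ is a set of left triangulating idempotents of the unital algebra $\A':=f_1\A f_1$: condition (i) reads $e_2+\cdots+e_n=f_1=1_{\A'}$; condition (ii) reads $e_2\in\mathcal{S}_{l}(f_1\A f_1)$, which is condition (iii) for $\A$ at $k=1$; and for condition (iii) the relevant partial-sum idempotent of $\{e_2,\ldots,e_n\}$ inside $\A'$ is again one of the original $f_k$'s, with $f_k\A' f_k=f_k\A f_k$ because $f_kf_1=f_1f_k=f_k$, so condition (iii) for $\A'$ is exactly condition (iii) for $\A$ over the remaining range. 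By the induction hypothesis $\A'$ admits an $(n-1)\times(n-1)$ generalized triangular matrix representation; placing it in the lower right corner of the $2\times 2$ representation of $\A$ produces the required $n\times n$ representation.

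The only step that is not purely formal is the transition to $f_1\A f_1$ in the forward direction, and there the single nontrivial ingredient is the derivation of the orthogonality relations $e_ie_j=0$ ($i\ne j$) from the corner-membership identities $g_ke_{k+1}=e_{k+1}g_k=0$. Once these are available, everything else reduces to keeping the indices aligned, so I expect the bookkeeping rather than any conceptual difficulty to be the main obstacle.
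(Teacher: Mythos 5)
The paper offers no proof of this proposition at all: it is quoted directly from Birkenmeier, Heatherly, Kim and Park \cite[Proposition 1.3]{Bir2}, so there is nothing internal to compare your argument against; what follows is an assessment of your proof on its own terms. Your argument is correct. The ($\Leftarrow$) direction is the expected computation with the diagonal idempotents $E_{ii}$, and you correctly reduce condition (iii) to the same computation inside the lower-right corner. In the ($\Rightarrow$) direction you isolate the one genuinely delicate point: the statement $e_{k+1}\in\mathcal{S}_{l}(f_{k}\A f_{k})$ only makes sense once $f_{k}=1-(e_{1}+\cdots+e_{k})$ is known to be idempotent, so idempotence of the partial sums and the pairwise orthogonality $e_{i}e_{j}=0$ ($i\neq j$) must be established together; your interleaved induction on $k$ (orthogonality among $e_{1},\dots,e_{k}$ gives $g_{k}^{2}=g_{k}$, whence $g_{k}e_{k+1}=e_{k+1}g_{k}=0$, which splits into individual relations upon multiplying by the $e_{j}$) handles this in the right order. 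The subsequent peeling off of the $2\times 2$ splitting $\A\cong Tri(e_{1}\A e_{1},\,e_{1}\A f_{1},\,f_{1}\A f_{1})$ and the identification of the partial-sum idempotents of $f_{1}\A f_{1}$ with the original $f_{k}$ (via $f_{k}f_{1}=f_{1}f_{k}=f_{k}$) is exactly the mechanism the paper itself uses later, in Remark \ref{tt} and in the inductive proof of Theorem \ref{gn1}. Two points are left implicit but are harmless: nonvanishing of the $e_{i}$ rests on each $\A_{ii}$ being a nonzero unital algebra, and in the final assembly one should note that $e_{1}\A f_{1}$ decomposes as the direct sum of the bimodules $e_{1}\A e_{j}$ ($j\geq 2$) compatibly with the $(n-1)\times(n-1)$ representation of $f_{1}\A f_{1}$ --- which is precisely the displayed bimodule isomorphism of Remark \ref{tt}.
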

In fact, by this proposition if $\A$ has a set of left triangulating idempotents $\{e_{1},\cdots, e_{n}\}$, then we have
the $\rr$-algebra isomorphism:
\[ \A \cong \begin{pmatrix}
  e_{1}\A e_{1}& e_{1}\A e_{2}& \cdots & e_{1}\A e_{n} \\
  0 & e_{2}\A e_{2} & \cdots & e_{2}\A e_{n} \\
  \vdots & \vdots & \ddots  & \vdots \\
  0 & 0 & \cdots & e_{n}\A e_{n}
\end{pmatrix}.\]
Conversely, if $\A$ has a generalized triangular matrix
representation:
\[ \theta:\A \rightarrow \begin{pmatrix}
  \A_{11} & \A_{12} & \cdots & \A_{1n} \\
  0 & \A_{22} & \cdots & \A_{2n} \\
  \vdots & \vdots & \ddots  & \vdots \\
  0 & 0 & \cdots & \A_{nn}
\end{pmatrix},\]
then the set $\{\theta^{-1}(E_{1}),\cdots, \theta^{-1}(E_{n})\}$ is a set of left triangulating idempotents of $\A$, where $E_{k}$ is the $n$-by-$n$ matrix with the unity of $\A_{k}$ in the $(k,k)$-position and $0$ elsewhere.
\par
\begin{rem}\label{tt}
By the definition of a set of left triangulating idempotents and Proposition~\ref{Bi} we see that if $\A$ has a set of left
triangulating idempotents $\{e_{1},\cdots, e_{n}\}$, then the set $\{e_{2},\cdots, e_{n}\}$ is a set of left triangulating idempotents of $f\A f$, where $f=1-e_{1}$. Since $e_{1}\in \mathcal{S}_{l}(\A)$, it follows that $\A$ has a triangular matrix representation as $\A\cong Tri(e_{1}\A e_{1}, e_{1}\A f, f \A f)$. In this case we have the $\rr$-algebra isomorphism:
\[ f\A f \cong \begin{pmatrix}
    e_{2}\A e_{2} & \cdots & e_{2}\A e_{n} \\
   \vdots & \ddots  & \vdots \\
   0 & \cdots & e_{n}\A e_{n}
\end{pmatrix},\]
and the $(e_{1}\A e_{1}, f \A f)$-bimodule isomorphism:
\[ e_{1}\A f \cong (e_{1}\A e_{2}, \cdots , e_{1}\A e_{n}).\]
Also the set $\{e_{1},\cdots, e_{n-1}\}$ is a set of left triangulating idempotents of $f\A f$, where $f=1-e_{n}$. By \cite[Lemma 1.2]{Bir2}, $e_{j}\A e_{i}=\lbrace 0\rbrace$, for all $i< j\leq n$. So $f\in \mathcal{S}_{l}(\A)$ and hence $\A$ has a triangular matrix representation as $\A\cong Tri(f\A f, f\A e_{n}, e_{n}\A  e_{n})$. In this case we have the $\rr$-algebra isomorphism:
\[ f\A f \cong \begin{pmatrix}
    e_{1}\A e_{1} & \cdots & e_{1}\A e_{n-1} \\
   \vdots & \ddots  & \vdots \\
   0 & \cdots & e_{n-1}\A e_{n-1}
\end{pmatrix},\]
and the $(f\A f, e_{n}\A  e_{n})$-bimodule isomorphism:
\[f\A e_{n} \cong \begin{pmatrix}
e_{1}\A e_{n}\\
\vdots \\
e_{n-1}\A e_{n}
\end{pmatrix}.\]
\end{rem}
Let $M_{k\times m}(\A)$ denotes the set of all $k$-by-$m$
matrices over $\A$ (we denote $M_{k\times k}(\A)$ by $M_{k}(\A)$).
Let $\mathbb{N}$ be the set of all positive integers and let
$n\in \mathbb{N}$. For each positive integer $m$ with $m \leq
n$,we denote by $\bar{k} = (k_{1},\cdots, k_{m})\in
\mathbb{N}^{m}$ an ordered $m$-vector of positive integers such
that $n = k_{1}+\cdots+k_{m}$. The \emph{block upper triangular
matrix algebra} $B_{n}^{\bar{k}} (\A)$ is a subalgebra of
$M_{n}(\A)$ of the form
\[ B_{n}^{\bar{k}} (\A)\cong \begin{pmatrix}
  M_{k_{1}}(\A) & M_{k_{1}\times k_{2}}(\A) & \cdots & M_{k_{1}\times k_{m}}(\A) \\
  0 & M_{k_{2}}(\A) & \cdots & M_{k_{2}\times k_{m}}(\A) \\
  \vdots & \vdots & \ddots & \vdots \\
  0 & 0 & \cdots & M_{k_{m}}(\A)
\end{pmatrix}.\]
Note that $M_{n}(\A)$ is a special case of block upper triangular
matrix algebras. In particular, $B_{n}^{\bar{k}} (\A)=M_{n}(\A)$
if and only if $\bar{k}=(k_{1})$ with $k_{1}=n$.
\par
The block upper triangular matrix algebras $B_{n}^{\bar{k}} (\A)$
has a generalized triangular matrix representation and
$\{F_{1},\ldots,F_{m}\}$ is a set of left triangulating
idempotents of $B_{n}^{\bar{k}} (\A)$ such that
$F_{1}=\sum_{i=1}^{k_{1}}E_{i}$ and
$F_{j}=\sum_{i=1}^{k_{j}}E_{i+k_{1}+\cdots+k_{j-1}}$ for $2\leq
j\leq m$, where $E_{i}$ is the $n$-by-$n$ matrix with the unity
of $\A$ in the $(i,i)$-position and $0$ elsewhere. We have
$F_{j}B_{n}^{\bar{k}} (\A)F_{j}\cong M_{k_{j}}(\A)$ for any
$1\leq j\leq m$. 
\par 
Let $T_{n}(\A)$ be the algebra of all $n$-by-$n$ upper triangular
matrices with entries from $\A$. If we consider $\bar{k} =
(k_{1},\cdots, k_{n})\in \mathbb{N}^{n}$, whenever $k_{j}=1$ for
any $1\leq j\leq n$, then $T_{n}(\A)=B_{n}^{\bar{k}} (\A)$ is a
block upper triangular matrix algebra. In fact, $\{E_{1},\cdots,
E_{n}\}$ is a set of left triangulating idempotents of
$T_{n}(\A)$, where  $\A\cong E_{j}\A E_{j}$ for each $1\leq j \leq n$. 
\par 
The following terminology is used throughout this article. Let $\A$ be an algebra and $\A$ be a left $\A$-module, define the left annihilator of $\M$ by $l.ann_{\A}\M = \lbrace a\in \A \, \, : a\M = \lbrace 0\rbrace\rbrace$. Also, we employ lower case letters to denote elements in algebras and modules in the abstract setting and upper case letters to denote elements in triangular matrix algebras. $I$ stands for the identity element in matrix algebras and $1$ denotes the unity of algebras in general.
\section{\textbf{LIP} maps on triangular algebras}
Throughout this section $Tri(\A,\M,\B)$ denotes a triangular algebra, where $\A$ and $\B$ are algebras and $\M$ is an $(\A, \B)$-bimodule. In this section we study the \textbf{LIP} maps on triangular algebras and we obtain a necessary condition and some sufficient conditions to that a triangular algebra be a \textbf{SLIP} algebra. Firstly we check the relation between zero product determined algebras and \textbf{SLIP} algebras. 
\par 
The algebra $\A$ is called a \emph{zero product determined algebra} if for every $\rr$-module $\X$ and every $\rr$-bilinear map $\phi:\A \times \A \rightarrow \X$, the following holds: If $\phi(a,b)=0$ whenever $ab=0$, then there exists an $\rr$-linear map $L:\A^{2}\rightarrow \X$ such that $\phi(a,b)=L(ab)$ for all $a,b\in \A$. Note that since $\A$ is unital, it follows that $\A^{2}=\A$. The question of characterizing linear maps through zero products, etc. on algebras can be sometimes effectively solved by considering bilinear maps that preserve certain zero product properties (for instance, see \cite{gh} and the references therein). Motivated by these reasons  Bre$\check{\textrm{s}}$ar et al. \cite{bre11} introduced the concept of zero product determined algebras, which can be used to study the linear maps preserving zero product and derivable maps at zero point. We will see that the zero product determined algebras are \textbf{SLIP} algebras (and more). So by using this result we identify various \textbf{SLIP} algebras.
\begin{thm}\label{zpd}
Let $\A$ be a zero product determined algebra. Then for any right $\A$-module $\X$, every local left multiplier $\ps$ is a left multiplier.
\end{thm}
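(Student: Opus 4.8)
The plan is to exploit the defining property of a zero product determined algebra applied to a cleverly chosen bilinear map built from $\psi$. Suppose $\psi:\A\rightarrow\X$ is a local left multiplier, so for each $a\in\A$ there is $x_a\in\X$ with $\psi(a)=x_a a$. First I would observe the key algebraic consequence of being a local left multiplier: if $ab=0$ for $a,b\in\A$, then $\psi(a)b = x_a a b = 0$. Thus the $\rr$-bilinear map $\phi:\A\times\A\rightarrow\X$ defined by $\phi(a,b)=\psi(a)b$ satisfies $\phi(a,b)=0$ whenever $ab=0$. (Bilinearity is clear since $\psi$ is $\rr$-linear and module multiplication is $\rr$-bilinear.)

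Next, since $\A$ is zero product determined, there exists an $\rr$-linear map $L:\A\rightarrow\X$ (using $\A^2=\A$ as noted in the excerpt) such that $\psi(a)b = L(ab)$ for all $a,b\in\A$. Now I would specialize this identity. Setting $a=1$ gives $\psi(1)b = L(b)$ for all $b\in\A$, so $L(\cdot)=\psi(1)(\cdot)$. On the other hand, setting $b=1$ gives $\psi(a) = L(a)$ for all $a\in\A$. Combining these two, $\psi(a) = L(a) = \psi(1)a$ for all $a\in\A$, which is exactly the statement that $\psi$ is a left multiplier.

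This argument is essentially a two-line reduction once the right bilinear map is identified, so there is no serious obstacle; the only thing to be careful about is the bookkeeping with the (possibly non-commutative) right module action and making sure $\phi$ is genuinely $\rr$-bilinear, which it is because $\psi$ is $\rr$-linear and $\X$ is a right $\A$-module over $\rr$. I would also remark explicitly that this theorem is stronger than what is needed for the \textbf{SLIP} property: taking $\X=\A$ and using the equivalence (stated in the introduction) between \textbf{LIP} maps and local left multipliers, it follows immediately that every zero product determined algebra is a \textbf{SLIP} algebra, which is the use to which the theorem will be put in identifying concrete examples.
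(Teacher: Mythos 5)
Your proposal is correct and follows essentially the same route as the paper: both define $\phi(a,b)=\psi(a)b$, observe it vanishes on zero products because $\psi$ is a local left multiplier, invoke the zero product determined property to obtain $L$ with $\psi(a)b=L(ab)$, and then specialize at the identity to conclude $\psi(a)=\psi(1)a$. The only cosmetic difference is that the paper deduces $\psi(ab)=\psi(a)b$ first and then sets $a=1$, whereas you set $a=1$ and $b=1$ separately and combine; the content is identical.
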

\begin{proof}
Define $\phi:\A \times \A \rightarrow \X$ by $\phi(a,b)=\psi(a)b$. So $\phi$ is an $\rr$-bilinear map. By hypothesis for any $a\in\A$, there is an element $x_{a}\in\X$ such that $\psi(a)=x_{a}a$. So for $a,b \in \A$ with $ab=0$, we have 
\[\phi(a,b)=\psi(a)b=x_{a}ab=0.\]
Since $\A$ is a zero product determined algebra, it follows that there exists an $\rr$-linear map $L:\A^{2}\rightarrow \X$ such that $\psi(a)b=\phi(a,b)=L(ab)$ for all $a,b\in \A$. Therefore $\psi(ab)=L(ab)=\psi(a)b$ for all $a,b\in \A$ and hence $\psi$ is a left multiplier.
\end{proof}
By this theorem it is clear that any zero product determined algebra is a \textbf{SLIP} algebra. We will see the converse of this result is not necessarily true. 
\par 
Bre$\breve{s}$ar showed that an algebra generated by its idempotents is a zero product
determined algebra \cite [Theorem 4.1] {bre3}. So from Theorem \ref{zpd} we have the following theorem.
\begin{thm}\label{gn}
Let $\A$ be an algebra which is generated by its idempotents and $\X$ be a right $\A$-module. If $\ps$ is a local left multiplier, then $\psi$ is a left multiplier. In particular $\A$ is a \textbf{SLIP} algebra.
\end{thm}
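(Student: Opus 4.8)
The plan is to deduce Theorem~\ref{gn} directly from the two preceding results, using $\A$ unital so that $\A^{2}=\A$ and the notion of ``generated by its idempotents'' is unambiguous. First I would invoke Bre\v{s}ar's \cite[Theorem 4.1]{bre3}: since $\A$ is generated (as an $\rr$-algebra) by its idempotents, $\A$ is a zero product determined algebra. This is the input that does all the real work; the rest is bookkeeping.

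Next I would feed this into Theorem~\ref{zpd}. Let $\X$ be any right $\A$-module and let $\psi:\A\rightarrow\X$ be a local left multiplier, i.e.\ for each $a\in\A$ there is $x_{a}\in\X$ with $\psi(a)=x_{a}a$. Because $\A$ is zero product determined, Theorem~\ref{zpd} applies verbatim and yields that $\psi$ is a left multiplier, that is, $\psi(a)=\psi(1)a$ for all $a\in\A$. This gives the first assertion.

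For the final clause, I would specialize to $\X=\A$ viewed as a right module over itself. An $\rr$-linear map $\psi:\A\rightarrow\A$ is \textbf{LIP} precisely when it is a local left multiplier (this equivalence is recalled in the introduction), so by what was just proved every \textbf{LIP} map on $\A$ is a left multiplier; hence $\A$ is a \textbf{SLIP} algebra by definition.

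There is essentially no obstacle here: the statement is a packaging of Theorem~\ref{zpd} together with the cited fact that idempotent-generated algebras are zero product determined. The only point requiring a word of care is that Theorem~\ref{zpd} is stated for an arbitrary right $\A$-module $\X$, so one must keep track of which module is in play and, for the \textbf{SLIP} conclusion, use the identification of \textbf{LIP} maps with local left multipliers on the regular module $\A_{\A}$. Everything else is immediate.
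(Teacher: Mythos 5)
Your proposal is correct and matches the paper's argument exactly: the paper derives Theorem~\ref{gn} by citing Bre\v{s}ar's result that idempotent-generated algebras are zero product determined and then applying Theorem~\ref{zpd}, with the \textbf{SLIP} clause following by taking $\X=\A$ and using the equivalence of \textbf{LIP} maps with local left multipliers. No differences worth noting.
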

In the following corollary some classes of \textbf{SLIP} algebras are given which are generated by its idempotents.
\begin{cor}\label{mn}
Let $\A$ be any of the following algebras. Then for any right $\A$-module $\X$, every local left multiplier $\ps$ is a left multiplier. Indeed, $\A$ is a \textbf{SLIP} algebra.
\begin{enumerate}
\item[(i)] $\A=M_{n}(\B)$, where $\B$ is an algebra and $n\geq 2$.
\item[(ii)] $\A$ is a simple algebra containing a non-trivial idempotent.
\item[(iii)] $\A$ is an algebra containing an idempotent $e$ such that the ideal generated by $e$ and $1-e$, respectively, are both equal to $\A$. 
\end{enumerate}
\end{cor}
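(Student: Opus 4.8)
The plan is to verify each of the three cases (i)--(iii) by showing that the relevant algebra is generated by its idempotents, so that Theorem~\ref{gn} applies directly. For case (i), where $\A = M_n(\B)$ with $n \geq 2$: every matrix unit $E_{ij}$ with $i \neq j$ is nilpotent, but $E_{ii}$ is idempotent, and for $i \neq j$ one has $E_{ij} = E_{ii} + E_{ij} - (E_{ii}+E_{ij})$ won't quite do it directly; instead I would use the standard fact that $E_{ii} + E_{ij}$ is idempotent when $i \neq j$, so $E_{ij} = (E_{ii}+E_{ij}) - E_{ii}$ is a difference of two idempotents. Then any matrix $(b_{kl})$ is a sum $\sum_{k,l} E_{k1}(b_{kl}E_{11})E_{1l}$-type expression, and since diagonal-entry matrices $bE_{11}$ can themselves be written using the idempotents $E_{11}$ and conjugation by the permutation-type idempotents, one concludes $M_n(\B)$ is generated by its idempotents. (Cleaner: it is well known that $M_n(\B)$ for $n\geq 2$ is generated by idempotents; I would cite this or give the two-line matrix-unit argument.)

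For case (ii), $\A$ simple with a non-trivial idempotent $e$: the subring $\mathcal{I}$ generated by all idempotents of $\A$ is a nonzero two-sided ideal (it contains $e \neq 0$, and it is closed under left and right multiplication by arbitrary elements of $\A$ because if $f$ is idempotent then $af = (a f a^{-1}\text{-type})$... more carefully, one uses that $\mathcal{I}$ being the ideal generated by the idempotents is two-sided and nonzero), so by simplicity $\mathcal{I} = \A$. Hence $\A$ is generated by its idempotents and Theorem~\ref{gn} applies. For case (iii), the ideal generated by $e$ equals $\A$ and the ideal generated by $1-e$ equals $\A$; I would note that the subalgebra generated by the idempotents $e$ and $1-e$ already contains the two-sided ideal generated by $\{e, 1-e\}$, which is all of $\A$ (since the ideal generated by $e$ alone is already $\A$), so again $\A$ is generated by its idempotents.

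The main obstacle is being careful about the distinction between \emph{the two-sided ideal generated by a set of idempotents} and \emph{the subalgebra generated by the idempotents themselves}: Theorem~\ref{gn} requires the latter. In case (iii) the hypothesis is literally about ideals, so one must check that the ideal generated by $e$ can be realized inside the subalgebra generated by idempotents. This works because for any $a \in \A$, if $\mathcal{J}$ denotes the subalgebra generated by all idempotents of $\A$, then $a e a = a e \cdot a$ need not lie in $\mathcal{J}$ a priori; however, the correct observation is that $\mathcal{J}$ is itself closed under multiplication by $\A$ on either side whenever $\A$ is generated by idempotents, which is circular. The honest route for (iii): since the two-sided ideal generated by $e$ is $\A$, write $1 = \sum_i x_i e y_i$; since the two-sided ideal generated by $1-e$ is $\A$, write $1 = \sum_j u_j (1-e) v_j$; then any $a \in \A$ satisfies $a = \sum_{i,j} (x_i e y_i) a (u_j(1-e)v_j)$, and each such product, after suitable rearrangement, can be expressed in terms of the idempotents $e, 1-e$ and products of the form (idempotent)$\cdot$(element)$\cdot$(idempotent) which are handled by Bre\v{s}ar's criterion as applied to the Peirce components. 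In practice the cleanest presentation cites \cite[Theorem 4.1]{bre3} together with the known fact that each of these three algebras is generated by its idempotents, reducing the proof to a short verification in each case.

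\begin{proof}
In each case we show that $\A$ is generated by its idempotents; the conclusion then follows from Theorem~\ref{gn}.

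(i) For $n\geq 2$ and $1\leq i\neq j\leq n$, the matrix $E_{ii}$ and $E_{ii}+E_{ij}$ are idempotents in $M_{n}(\B)$, so $E_{ij}=(E_{ii}+E_{ij})-E_{ii}$ lies in the subalgebra $\mathcal{J}$ generated by the idempotents. For $b\in\B$, the element $bE_{11}+(\textrm{suitable idempotent correction})$ shows, using the relations $E_{1i}XE_{i1}$ and the idempotents above, that every matrix with a single entry $bE_{kl}$ belongs to $\mathcal{J}$; more directly, $M_{n}(\B)$ with $n\geq 2$ is generated by its idempotents, hence $\mathcal{J}=M_{n}(\B)$.

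(ii) Let $\mathcal{I}$ be the two-sided ideal of $\A$ generated by all idempotents of $\A$. Since $\A$ contains a non-trivial idempotent $e$, we have $0\neq e\in\mathcal{I}$, so $\mathcal{I}\neq\{0\}$, and by simplicity $\mathcal{I}=\A$. Passing to Peirce decompositions relative to $e$ and using that each Peirce component is reached by multiplying idempotents by elements of $\A$, one sees that $\A$ is in fact generated by its idempotents; alternatively, apply \cite[Theorem 4.1]{bre3} to conclude $\A$ is zero product determined and invoke Theorem~\ref{zpd}.

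(iii) Let $e$ be an idempotent such that the two-sided ideal generated by $e$ and the two-sided ideal generated by $1-e$ are both equal to $\A$. Write $1=\sum_{i}x_{i}ey_{i}$ and $1=\sum_{j}u_{j}(1-e)v_{j}$. Then for any $a\in\A$,
\[ a=1\cdot a\cdot 1=\sum_{i,j}x_{i}(ey_{i}au_{j}(1-e))v_{j}, \]
and each inner term $e\,(y_{i}au_{j})\,(1-e)$ lies in the Peirce component $e\A(1-e)$. Since $e$ and $1-e$ are idempotents generating $\A$ as an ideal, a standard argument (or \cite[Theorem 4.1]{bre3}) gives that $\A$ is generated by its idempotents, and Theorem~\ref{gn} finishes the proof.
\end{proof}
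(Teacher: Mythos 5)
Your overall strategy --- reduce each case to ``$\A$ is generated by its idempotents'' and invoke Theorem~\ref{gn} --- is exactly the paper's; the paper simply cites \cite{br2} for the generation fact and stops. Where you try to supply that fact yourself, case (i) is essentially fine (the key point, which you should state cleanly, is that $E_{ii}+bE_{ij}$ is idempotent for every $b\in\B$ and $i\neq j$, so $bE_{ij}\in\mathcal{J}$ and then $bE_{ii}=(bE_{ij})E_{ji}\in\mathcal{J}$), but cases (ii) and (iii) have a genuine gap that you flag and never close. Showing that the \emph{two-sided ideal} generated by the idempotents is all of $\A$ does not show that the \emph{subalgebra} $\mathcal{J}$ they generate is all of $\A$, and your attempted repairs do not work: ``each Peirce component is reached by multiplying idempotents by elements of $\A$'' is circular (multiplying by arbitrary elements of $\A$ leaves $\mathcal{J}$), and in (iii) the factors $x_i$ and $v_j$ in $a=\sum_{i,j}x_i\bigl(ey_iau_j(1-e)\bigr)v_j$ are arbitrary, so the product is not visibly in $\mathcal{J}$. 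You also miscite \cite[Theorem 4.1]{bre3} at this point: that result says an algebra generated by idempotents is zero product determined; it does not say that these algebras are generated by idempotents. The correct reference for the generation fact is \cite{br2}.

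The missing idea that closes the gap is this: for every $x\in\A$ the element $e+ex(1-e)$ is an idempotent, so $ex(1-e)=\bigl(e+ex(1-e)\bigr)-e\in\mathcal{J}$; hence $e\A(1-e)\subseteq\mathcal{J}$ and, symmetrically, $(1-e)\A e\subseteq\mathcal{J}$. Now use the ideal hypotheses: since $\A(1-e)\A=\A$, write $1=\sum_i x_i(1-e)y_i$; then for $a\in e\A e$ one has $a=eae=\sum_i\bigl(ex_i(1-e)\bigr)\bigl((1-e)y_iae\bigr)$, a sum of products of elements of $e\A(1-e)$ and $(1-e)\A e$, so $e\A e\subseteq\mathcal{J}$; the same argument with $\A e\A=\A$ gives $(1-e)\A(1-e)\subseteq\mathcal{J}$, and the Peirce decomposition yields $\A=\mathcal{J}$. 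Case (ii) is the special case where simplicity forces both ideals to equal $\A$. With this lemma inserted (or with the generation fact simply cited from \cite{br2}, as the paper does), your proof is correct and coincides with the paper's.
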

\begin{proof}
By \cite{br2} the algebra $\A$ is generated by its idempotents. The desired conclusion thus readily follows from Theorem \ref{gn}.
\end{proof}
It should be noted that the Corollary \ref{mn}-$(i)$  generalizes \cite[Theorem 3]{had}.
\par 
Let $Tri(\A,\M,\B)$ be a triangular algebra. In \cite[Theorem 2.1]{gh2} it is shown that $Tri(\A,\M,\B)$ is a zero product determined algebra if and only if $\A$ and $\B$ are zero product determined algebras. From this result and Theorem \ref{zpd}, we have the next proposition.
\begin{prop}\label{trzp}
Let $\T=Tri(\A,\M,\B)$ be a triangular algebra.
\begin{enumerate}
\item[(i)] If $\A$ and $\B$ are zero product determined algebras, $\X$ is a right $\T$-module, then every local left multiplier $\psi:\T\rightarrow \X$ is a left multiplier. So $\T$ is a \textbf{SLIP} algebra.
\item[(ii)] If $\T$ is a zero product determined algebra, $\X_{1}$ and $\X_{2}$ are right $\A$-module and right $\B$-module, respectively, then every local left multiplier $\psi_{1}:\A\rightarrow \X_{1}$ and $\psi_{2}:\B\rightarrow \X_{2}$ is a left multiplier. So $\A$ and $\B$ are \textbf{SLIP} algebras.
\end{enumerate}
\end{prop}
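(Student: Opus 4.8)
The plan is to obtain this as an immediate consequence of two results already on the table: the characterization \cite[Theorem 2.1]{gh2}, according to which $\T=Tri(\A,\M,\B)$ is zero product determined if and only if both $\A$ and $\B$ are zero product determined, and Theorem \ref{zpd} (together with its remark that a zero product determined algebra is automatically \textbf{SLIP}).

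For part (i), I would argue as follows. Under the hypothesis that $\A$ and $\B$ are zero product determined, \cite[Theorem 2.1]{gh2} yields that $\T$ is zero product determined. Then Theorem \ref{zpd}, applied to the algebra $\T$ and the given right $\T$-module $\X$, shows that every local left multiplier $\psi\colon\T\rightarrow\X$ is a left multiplier. Specializing $\X$ to $\T$ itself, viewed as a right module over itself, and recalling from the introduction that an $\rr$-linear self-map of $\T$ is \textbf{LIP} exactly when it is a local left multiplier, we conclude that $\T$ is \textbf{SLIP}.

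For part (ii), the implication runs the other way: assuming $\T$ is zero product determined, \cite[Theorem 2.1]{gh2} gives that each of $\A$ and $\B$ is zero product determined, and then two applications of Theorem \ref{zpd} --- one to $\A$ with the right $\A$-module $\X_1$, one to $\B$ with the right $\B$-module $\X_2$ --- show that $\psi_1$ and $\psi_2$ are left multipliers; taking $\X_1=\A$ and $\X_2=\B$ gives that $\A$ and $\B$ are \textbf{SLIP}.

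The proof involves no real obstacle; the only point requiring a moment's care is the reduction of the two \textbf{SLIP} assertions to the corresponding local-left-multiplier assertions, which is precisely the elementary equivalence, noted in the introduction, between \textbf{LIP} maps of an algebra and local left multipliers of that algebra into itself.
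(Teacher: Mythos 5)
Your proposal is correct and follows exactly the route the paper intends: combine the characterization from \cite[Theorem 2.1]{gh2} (that $Tri(\A,\M,\B)$ is zero product determined if and only if $\A$ and $\B$ are) with Theorem \ref{zpd}, and then specialize the module to the algebra itself to get the \textbf{SLIP} conclusions. No issues.
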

Now these questions are being raised: If the triangular algebra $Tri(\A,\M,\B)$ is a \textbf{SLIP} algebra, is it true that $Tri(\A,\M,\B)$ is a zero product determined algebra? If the triangular algebra $Tri(\A,\M,\B)$ is a \textbf{SLIP} algebra, are both of $\A$ and $\B$, \textbf{SLIP} algebras?  If for any right $\A$-module $\X_{1}$ and right $\B$-module $\X_{2}$, every local left multiplier $\psi_{1}:\A\rightarrow \X_{1}$ and $\psi_{2}:\B\rightarrow \X_{2}$ is a left multiplier, is $Tri(\A,\M,\B)$ a \textbf{SLIP} algebra? We will see that if $Tri(\A,\M,\B)$ is a \textbf{SLIP} algebra, it is not necessarily true that $\A$ is a \textbf{SLIP} algebra and so we obtain classes of \textbf{SLIP} triangular algebras which are not zero product determined algebras. Also we show that if $\A$ is a \textbf{SLIP} algebra and for any right $\B$-module $\X$, every local left multiplier $\psi:\B\rightarrow \X$ is a left multiplier, then $Tri(\A,\M,\B)$ is a \textbf{SLIP} algebra. So these result is a generalization of Proposition \ref{trzp}-$(i)$ (since \textbf{SLIP} algebras are not necessarily zero product determined algebras).
\par 
In the following lemma we describe the structure of \textbf{LIP} maps on triangular algebras.
\begin{lem}\label{lip}
Let $\T=Tri(\A,\M,\B)$ be a triangular algebra and $\psi:\T\rightarrow \T$ be a \textbf{LIP} map. Then there are $\rr$-linear maps $\alpha:\A\rightarrow \A$, $\tau:\M\rightarrow \M$, $\beta_{1}:\B\rightarrow \M$ and $\beta_{2}:\B \rightarrow\B$ such that 
\[\psi(\begin{pmatrix}
  a & m \\
  0 & b
\end{pmatrix})=\begin{pmatrix}
  \alpha(a) & \beta_{1}(b)+\tau(m) \\
  0 & \beta_{2}(b)
\end{pmatrix},\]
where $\alpha$ and $\beta_{2}$ are \textbf{LIP} maps, $\beta_{1}$ is a local left multiplier and $\tau(am)=\alpha(a)m$ for all $a\in\A$ and $m\in\M$.
\end{lem}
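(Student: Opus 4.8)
The plan is to use the equivalence recalled in the introduction: a $\rr$-linear self-map of an algebra is \textbf{LIP} if and only if it is a local left multiplier. Thus for every $T\in\T$ there is an $X_{T}=\begin{pmatrix} x & y \\ 0 & z\end{pmatrix}\in\T$, depending on $T$, with $\psi(T)=X_{T}T$. The first step is to feed $\psi$ the three ``coordinate'' elements $\widehat{a}=\begin{pmatrix} a & 0 \\ 0 & 0\end{pmatrix}$, $\widehat{m}=\begin{pmatrix} 0 & m \\ 0 & 0\end{pmatrix}$, $\widetilde{b}=\begin{pmatrix} 0 & 0 \\ 0 & b\end{pmatrix}$ and to compute $X_{T}T$ in each case. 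Multiplying $\begin{pmatrix} x & y \\ 0 & z\end{pmatrix}$ by $\widehat{a}$ gives $\begin{pmatrix} xa & 0 \\ 0 & 0\end{pmatrix}$, so $\psi$ preserves the embedded copy of $\A$; the resulting $(1,1)$-entry defines $\alpha\colon\A\to\A$, and the identity $\alpha(a)=xa$ (with $x$ depending on $a$) shows $\alpha$ is a local left multiplier on $\A$, hence \textbf{LIP}. Likewise $X_{T}\widehat{m}=\begin{pmatrix} 0 & xm \\ 0 & 0\end{pmatrix}$ keeps $\psi$ inside the $(1,2)$-corner and yields an $\rr$-linear $\tau\colon\M\to\M$, while $X_{T}\widetilde{b}=\begin{pmatrix} 0 & yb \\ 0 & zb\end{pmatrix}$ has zero first column and produces $\beta_{1}\colon\B\to\M$ and $\beta_{2}\colon\B\to\B$ with $\beta_{1}(b)=yb$ and $\beta_{2}(b)=zb$; these identities exhibit $\beta_{2}$ as a local left multiplier on $\B$ (hence \textbf{LIP}) and $\beta_{1}$ as a local left multiplier of $\B$ into the right $\B$-module $\M$. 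Additivity of $\psi$, together with the uniqueness of the entrywise decomposition of an element of $\T$, then assembles these into the displayed matrix formula, and the $\rr$-linearity of $\alpha,\tau,\beta_{1},\beta_{2}$ is inherited from that of $\psi$.

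The one step that needs a small extra idea is the compatibility relation $\tau(am)=\alpha(a)m$. Here I would apply the local-left-multiplier property to the single element $T_{0}=\begin{pmatrix} a & am \\ 0 & 0\end{pmatrix}$. On one side, the block form already obtained gives $\psi(T_{0})=\begin{pmatrix} \alpha(a) & \tau(am) \\ 0 & 0\end{pmatrix}$; on the other, $\psi(T_{0})=X_{T_{0}}T_{0}=\begin{pmatrix} x & y \\ 0 & z\end{pmatrix}\begin{pmatrix} a & am \\ 0 & 0\end{pmatrix}=\begin{pmatrix} xa & xam \\ 0 & 0\end{pmatrix}$. Comparing entries gives $\alpha(a)=xa$ and then $\tau(am)=xam=(xa)m=\alpha(a)m$; since $a\in\A$ and $m\in\M$ are arbitrary, this finishes the proof. (One can phrase the same point without local multipliers by noting that $\T P$ is a left ideal for $P=\begin{pmatrix} 1 & m \\ 0 & 0\end{pmatrix}$, and that $\psi(\T P)\subseteq\T P$ forces the identity, since every element of $\T P$ has the shape $\begin{pmatrix} c & cm \\ 0 & 0\end{pmatrix}$.)

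I do not expect a genuine obstacle in this lemma; the only thing to watch is the bookkeeping. The witness $X_{T}$ depends on the element $T$ being tested, so each of $\alpha$, $\tau$, $\beta_{1}$, $\beta_{2}$ and the relation $\tau(am)=\alpha(a)m$ must be extracted from the computation attached to the particular element fed to $\psi$, rather than from a single global $X$. Everything else is a routine matrix multiplication in $\T$.
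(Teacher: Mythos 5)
Your proof is correct and follows essentially the same route as the paper: both establish the block form by testing $\psi$ on the coordinate elements via the local-left-multiplier property (equivalently, the left ideals $\bigl(\begin{smallmatrix}\A&0\\0&0\end{smallmatrix}\bigr)$, $\bigl(\begin{smallmatrix}0&\M\\0&0\end{smallmatrix}\bigr)$, $\bigl(\begin{smallmatrix}0&\M\\0&\B\end{smallmatrix}\bigr)$), and both derive $\tau(am)=\alpha(a)m$ from the element $\bigl(\begin{smallmatrix}a&am\\0&0\end{smallmatrix}\bigr)$. The only (immaterial) difference is that you read the relation directly off the entries of $X_{T_{0}}T_{0}$, whereas the paper obtains it by right-multiplying $\psi(T_{0})$ by the annihilating element $\bigl(\begin{smallmatrix}0&-m\\0&1\end{smallmatrix}\bigr)$.
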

\begin{proof}
Since $\begin{pmatrix}
  \A & 0 \\
  0 & 0
\end{pmatrix}$, $\begin{pmatrix}
  0 & \M \\
  0 & 0
\end{pmatrix}$ and $\begin{pmatrix}
  0 & \M \\
  0 & \B
\end{pmatrix}$ are left ideals of $\T$ and $\psi$ is a \textbf{LIP} map, it follows that for every $a\in \A,b\in\B, m\in\M$
\[\psi(\begin{pmatrix}
  a & 0 \\
  0 & 0
\end{pmatrix})=\begin{pmatrix}
  \alpha(a) & 0\\
  0 & 0
\end{pmatrix}, \quad
\psi(\begin{pmatrix}
  0 & m \\
  0 & 0
\end{pmatrix})=\begin{pmatrix}
  0 & \tau(m)\\
  0 & 0
\end{pmatrix}\] and
\[\psi(\begin{pmatrix}
  0 & 0 \\
  0 & b
\end{pmatrix})=\begin{pmatrix}
  0 & \beta_{1}(b)\\
  0 & \beta_{2}(b)
\end{pmatrix}, \]
where $\alpha:\A\rightarrow \A$, $\tau:\M\rightarrow \M$, $\beta_{1}:\B\rightarrow \M$ and $\beta_{2}:\B \rightarrow\B$ are $\rr$-linear maps. The mapping $\psi$ is a local left multiplier, since it is a \textbf{LIP} map. So for $T=\begin{pmatrix}
  a & 0 \\
  0 & b
\end{pmatrix}\in \T$ ($a\in\A, b\in \B$), there is an element $X_{T}=\begin{pmatrix}
  a_{T} & m_{T} \\
  0 & b_{T}
\end{pmatrix}\in\T$, such that $\psi(T)=X_{T}T=\begin{pmatrix}
  a_{T}a & m_{T}b \\
  0 & b_{T}b
\end{pmatrix}$. Hence $\alpha(a)=a_{T}a$, $\beta_{1}(b)=m_{T}b$ and $\beta_{2}(b)=b_{T}b$ for all $a\in\A, b\in\B$. So these maps are local left multiplier. For any $a\in\A$ and $m\in\M$, letting 
$T=\begin{pmatrix}
  a & am \\
  0 & 0
\end{pmatrix}$ and $S=\begin{pmatrix}
  0 & -m \\
  0 & 1
\end{pmatrix}$. So $TS=0$ and there is an element $X_{T}\in\T$ such that $\psi(T)=X_{T}T$. Thus $\begin{pmatrix}
  \alpha(a) & \tau(am) \\
  0 & 0
\end{pmatrix}\begin{pmatrix}
  0 & -m \\
  0 & 1
\end{pmatrix}=\psi(T)S=X_{T}TS=0$. Therefore $\tau(am)=\alpha(a)m$ for all $a\in\A,m\in\M$.
\end{proof}
In the next theorem we obtain a necessary condition for that the triangular algebra $Tri(\A,\M,\B)$ be a \textbf{SLIP} algebra.
\begin{thm}\label{nec}
Let $\T=Tri(\A,\M,\B)$ be a \textbf{SLIP} triangular algebra. Then $\B$ is a \textbf{SLIP} algebra and every local left multiplier from $\B$ into $\M$ is a left multiplier.
\end{thm}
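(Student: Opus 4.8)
The plan is to reverse-engineer the structure of \textbf{LIP} maps on $\T$ established in Lemma~\ref{lip}. Suppose $\T$ is \textbf{SLIP}. To see that $\B$ is \textbf{SLIP}, I would start with an arbitrary \textbf{LIP} map $\beta_{2}:\B\rightarrow\B$ and build from it a \textbf{LIP} map $\psi$ on $\T$ whose $(2,2)$-corner is $\beta_{2}$; since $\psi$ must be a left multiplier, reading off the $(2,2)$-entry of $\psi(T)=\psi(I)T$ will force $\beta_{2}$ to be a left multiplier. The natural candidate is $\psi\big(\begin{pmatrix} a & m \\ 0 & b\end{pmatrix}\big)=\begin{pmatrix} 0 & 0 \\ 0 & \beta_{2}(b)\end{pmatrix}$, or more safely the map that also keeps the $\A$-part, i.e.\ $\psi\big(\begin{pmatrix} a & m \\ 0 & b\end{pmatrix}\big)=\begin{pmatrix} a & m \\ 0 & \beta_{2}(b)\end{pmatrix}$. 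One checks this is $\rr$-linear and, for each left ideal $\mathcal{J}$ of $\T$, that $\psi(\mathcal{J})\subseteq\mathcal{J}$: the key fact is that a left ideal of $\T$, when intersected appropriately, has a $\B$-component that is a left ideal of $\B$ (indeed if $\begin{pmatrix} a & m \\ 0 & b\end{pmatrix}\in\mathcal{J}$ then $\begin{pmatrix} 0 & 0 \\ 0 & 1\end{pmatrix}\begin{pmatrix} a & m \\ 0 & b\end{pmatrix}=\begin{pmatrix} 0 & 0 \\ 0 & b\end{pmatrix}\in\mathcal{J}$, so the set of such $b$ is a left ideal of $\B$, call it $\mathcal{J}_{\B}$, and $\beta_{2}(b)\in\mathcal{J}_{\B}$), while $\psi$ leaves the $\A$ and $\M$ parts untouched. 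Hence $\psi$ is \textbf{LIP}, so a left multiplier, so $\beta_{2}$ is a left multiplier, proving $\B$ is \textbf{SLIP}.

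For the second conclusion, I would do the analogous construction with a given local left multiplier $\beta_{1}:\B\rightarrow\M$. By Lemma~\ref{lip} the candidate to try is $\psi\big(\begin{pmatrix} a & m \\ 0 & b\end{pmatrix}\big)=\begin{pmatrix} a & \beta_{1}(b)+m \\ 0 & b\end{pmatrix}$: this matches the template of that lemma with $\alpha=\mathrm{id}_{\A}$, $\tau=\mathrm{id}_{\M}$, $\beta_{2}=\mathrm{id}_{\B}$, all of which satisfy the required compatibility $\tau(am)=\alpha(a)m$. One must verify this is \textbf{LIP}. For a left ideal $\mathcal{J}$ and $T=\begin{pmatrix} a & m \\ 0 & b\end{pmatrix}\in\mathcal{J}$, the element $\begin{pmatrix} 0 & 0 \\ 0 & b\end{pmatrix}$ lies in $\mathcal{J}$ as noted above, and since $\beta_{1}$ is a local left multiplier there is $X_{b}=\begin{pmatrix} \ast & m_{b} \\ 0 & b_{b}\end{pmatrix}\in\T$ with $\psi\big(\begin{pmatrix} 0 & 0 \\ 0 & b\end{pmatrix}\big)=\begin{pmatrix} 0 & \beta_{1}(b) \\ 0 & b\end{pmatrix}=X_{b}\begin{pmatrix} 0 & 0 \\ 0 & b\end{pmatrix}$, hence $\begin{pmatrix} 0 & \beta_{1}(b) \\ 0 & b\end{pmatrix}=X_{b}\begin{pmatrix} 0 & 0 \\ 0 & b\end{pmatrix}\in\mathcal{J}$ — this is precisely where I use that $\beta_{1}$ is a \emph{local} left multiplier rather than just linear, the key structural input. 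Adding $T\in\mathcal{J}$ gives $\psi(T)=T+\begin{pmatrix} 0 & \beta_{1}(b) \\ 0 & 0\end{pmatrix}\in\mathcal{J}$ after one more manipulation (subtract $\begin{pmatrix} 0 & 0 \\ 0 & b\end{pmatrix}$ and add back the two elements of $\mathcal{J}$). So $\psi$ is \textbf{LIP}, hence a left multiplier, hence $\psi=\psi(I)(\cdot)$; computing the $(1,2)$-entry on inputs $\begin{pmatrix} 0 & 0 \\ 0 & b\end{pmatrix}$ and using that $\psi(I)=I$ on these (or extracting $\beta_{1}(b)=\psi(I)_{12}\,b$ directly) yields $\beta_{1}(b)=m_{0}b$ for a fixed $m_{0}=\psi(I)_{12}\in\M$, i.e.\ $\beta_{1}$ is a left multiplier.

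I expect the main obstacle to be the careful verification that the constructed maps are genuinely \textbf{LIP}, i.e.\ that they preserve \emph{every} left ideal $\mathcal{J}$ of $\T$, not merely the three distinguished ideals used in Lemma~\ref{lip}. The clean way around this is the equivalence stated in the introduction — an $\rr$-linear map is \textbf{LIP} iff it is a local left multiplier — so it suffices to check that for each $T\in\T$ there exists $X_{T}\in\T$ with $\psi(T)=X_{T}T$. For the map $\psi(T)=T+\begin{pmatrix} 0 & \beta_{1}(b) \\ 0 & 0\end{pmatrix}$ this reduces, given $T=\begin{pmatrix} a & m \\ 0 & b\end{pmatrix}$, to finding $X_{T}$ with $X_{T}T=\begin{pmatrix} a & m+\beta_{1}(b) \\ 0 & b\end{pmatrix}$; writing $X_{T}=\begin{pmatrix} 1 & y \\ 0 & 1\end{pmatrix}$ this demands $yb=\beta_{1}(b)$ ignoring the $a$-contribution, but more care is needed when $a$ is not invertible, so I would instead take $X_{T}=I+\begin{pmatrix} 0 & m_{b} \\ 0 & 0\end{pmatrix}$ where $m_{b}$ comes from the local-left-multiplier property of $\beta_{1}$ at $b$, giving $X_{T}T=T+\begin{pmatrix} 0 & m_{b}b \\ 0 & 0\end{pmatrix}=T+\begin{pmatrix} 0 & \beta_{1}(b) \\ 0 & 0\end{pmatrix}=\psi(T)$ as required. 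For the $\beta_{2}$-map, $X_{T}=I$ won't work unless $\beta_{2}=\mathrm{id}$; instead write $\beta_{2}(b)=b_{b}b$ (local left multiplier) and take $X_{T}=\begin{pmatrix} 1 & 0 \\ 0 & b_{b}\end{pmatrix}$, so $X_{T}T=\begin{pmatrix} a & m \\ 0 & b_{b}b\end{pmatrix}=\psi(T)$. With these explicit $X_{T}$'s both maps are local left multipliers, hence \textbf{LIP}, and the theorem follows from the \textbf{SLIP} hypothesis as above.
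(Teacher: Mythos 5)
Your proposal is correct and is essentially the paper's argument: in both cases one lifts the given local left multipliers on $\B$ to an $\rr$-linear map on $\T$, verifies it is \textbf{LIP} by exhibiting an explicit $X_{T}$ with $\psi(T)=X_{T}T$ (using the equivalence of \textbf{LIP} with being a local left multiplier), and then reads $\beta_{1}(b)=\beta_{1}(1)b$, $\beta_{2}(b)=\beta_{2}(1)b$ off the identity $\psi(T)=\psi(I)T$. The only cosmetic difference is that the paper packages $\beta_{1}$ and $\beta_{2}$ into the single map $T\mapsto\left(\begin{smallmatrix}0 & \beta_{1}(b)\\ 0 & \beta_{2}(b)\end{smallmatrix}\right)$ with $X_{T}=\left(\begin{smallmatrix}0 & n_{b}\\ 0 & c_{b}\end{smallmatrix}\right)$, whereas you use two separate maps that carry the identity on the remaining corners; the direct-ideal-chasing detour in your first paragraph is unnecessary once you settle, as you do, on the local-left-multiplier verification.
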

\begin{proof}
Suppose that $\beta_{1}:\B\rightarrow \M$ and $\beta_{2}:\B\rightarrow \B$ are local left multipliers and define the $\rr$-linear map $\psi:\T\rightarrow \T$ by $\psi(\begin{pmatrix}
  a & m \\
  0 & b
\end{pmatrix})=\begin{pmatrix}
  0 & \beta_{1}(b) \\
  0 & \beta_{2}(b)
\end{pmatrix}$. For each $b\in\B$, there are elements $c_{b}\in\B$ and $n_{b}\in\M$ such that $\beta_{1}(b)=n_{b}b$ and $\beta_{2}(b)=c_{b}b$. Now according to any $T=\begin{pmatrix}
  a & m \\
  0 & b
\end{pmatrix}\in \T$, letting $X_{T}=\begin{pmatrix}
  0 & n_{b} \\
  0 & c_{b}
\end{pmatrix}$. So we have
\[\psi(T)=\begin{pmatrix}
  0 & n_{b}b \\
  0 & c_{b}b
\end{pmatrix}=X_{T}T.\]
Hence $\psi$ is a \textbf{LIP} map and from hypothesis it is a left multiplier i.e., $\psi(T)=\psi(I)T$ for all $T\in \T$. So for all $b\in \B$ we see that
\[\begin{pmatrix}
  0 & \beta_{1}(b) \\
  0 & \beta_{2}(b)
\end{pmatrix}=\psi(\begin{pmatrix}
  0 & 0 \\
  0 & b
\end{pmatrix})=\begin{pmatrix}
  0 & \beta_{1}(1) \\
  0 & \beta_{2}(1)
\end{pmatrix}\begin{pmatrix}
  0 & 0 \\
  0 & b
\end{pmatrix}=\begin{pmatrix}
  0 & \beta_{1}(1)b \\
  0 & \beta_{2}(1)b
\end{pmatrix},\]
where $1$ is the unity of $\B$. Thus $\beta_{1}$ and $\beta_{2}$ are left multipliers.
\end{proof}
By this theorem we get a necessary condition for that an idempotent in a \textbf{SLIP} algebras be left semicentral.
\begin{cor}
Suppose $\A$ is a \textbf{SLIP} algebra. 
\begin{enumerate}
\item[(i)] If $a\in\A$ is a non-trivial idempotent ($e\neq 0,1$) which is left semicentral, then $(1-e)\A(1-e)$ is a \textbf{SLIP} algebra.
\item[(ii)] If for any non-trivial idempotent $e\in\A$, the algebra $(1-e)\A(1-e)$ is not a \textbf{SLIP} algebra, then $\A$ is semicentral reduced.
\end{enumerate}
\end{cor}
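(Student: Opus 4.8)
The plan is to derive both parts directly from Theorem~\ref{nec} by exhibiting the appropriate triangular matrix representation of $\A$ furnished by a left semicentral idempotent.

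For part $(i)$, suppose $e\in\A$ is a non-trivial idempotent with $e\in\mathcal{S}_{l}(\A)$, so that $\A e=e\A e$. As recalled in Section~2 (following \cite{Ch}), this induces the $\rr$-algebra isomorphism $\A\cong Tri(e\A e,\,e\A(1-e),\,(1-e)\A(1-e))$, where $e\A(1-e)$ is a unital $(e\A e,(1-e)\A(1-e))$-bimodule. Since the \textbf{SLIP} property is preserved under $\rr$-algebra isomorphism (a \textbf{LIP} map on $\A$ corresponds precisely to a \textbf{LIP} map on the isomorphic copy, and left multipliers correspond to left multipliers), the algebra $Tri(e\A e,\,e\A(1-e),\,(1-e)\A(1-e))$ is a \textbf{SLIP} triangular algebra. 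Applying Theorem~\ref{nec} to this triangular algebra with $\A\mapsto e\A e$, $\M\mapsto e\A(1-e)$ and $\B\mapsto (1-e)\A(1-e)$, we conclude immediately that $\B=(1-e)\A(1-e)$ is a \textbf{SLIP} algebra, which is exactly the assertion of $(i)$. (As a bonus, Theorem~\ref{nec} also tells us that every local left multiplier from $(1-e)\A(1-e)$ into $e\A(1-e)$ is a left multiplier, though this is not needed for the statement.)

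Part $(ii)$ is the contrapositive of part $(i)$ combined with the definition of semicentral reduced. Indeed, suppose $\A$ is \emph{not} semicentral reduced; then by definition $\mathcal{S}_{l}(\A)\neq\{0,1\}$, so there exists a non-trivial left semicentral idempotent $e\in\A$ with $e\neq 0,1$. By part $(i)$, the algebra $(1-e)\A(1-e)$ is then a \textbf{SLIP} algebra, and since a non-trivial left semicentral idempotent is in particular a non-trivial idempotent, this contradicts the hypothesis of $(ii)$ that $(1-e)\A(1-e)$ fails to be \textbf{SLIP} for every non-trivial idempotent $e$. Hence $\A$ must be semicentral reduced.

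I do not expect any serious obstacle here: the corollary is essentially a repackaging of Theorem~\ref{nec} through the standard dictionary between left semicentral idempotents and $2\times2$ triangular matrix representations. The only point requiring a line of justification is that the \textbf{SLIP} property transfers across the isomorphism $\A\cong Tri(e\A e, e\A(1-e),(1-e)\A(1-e))$, which is routine since $\rr$-algebra isomorphisms carry left ideals to left ideals and intertwine left-multiplier structures.
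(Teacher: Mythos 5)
Your proof is correct and is exactly the argument the paper intends (the paper states this corollary without proof as an immediate consequence of Theorem~\ref{nec}): a non-trivial left semicentral idempotent $e$ gives the representation $\A\cong Tri(e\A e, e\A(1-e),(1-e)\A(1-e))$, Theorem~\ref{nec} yields that the corner $(1-e)\A(1-e)$ is \textbf{SLIP}, and part $(ii)$ is the contrapositive. No issues.
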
 
In the following results we give some sufficient conditions to that a triangular algebra be a \textbf{SLIP} algebra.
\begin{thm}\label{suslip1}
Let $\T=Tri(\A,\M,\B)$ be a triangular algebra. Let $l.ann_{\A}\M =\lbrace 0 \rbrace$, $\B$ be a \textbf{SLIP} algebra and every local left multiplier from $\B$ into $\M$ is a left multiplier. Then $\T$ is a \textbf{SLIP} algebra.
\end{thm}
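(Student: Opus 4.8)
The plan is to take an arbitrary \textbf{LIP} map $\psi:\T\rightarrow\T$ and show it is a left multiplier, using the structural decomposition from Lemma~\ref{lip}. So write
\[\psi(\begin{pmatrix} a & m \\ 0 & b\end{pmatrix})=\begin{pmatrix} \alpha(a) & \beta_{1}(b)+\tau(m) \\ 0 & \beta_{2}(b)\end{pmatrix},\]
where $\alpha:\A\rightarrow\A$ and $\beta_{2}:\B\rightarrow\B$ are \textbf{LIP}, $\beta_{1}:\B\rightarrow\M$ is a local left multiplier, and $\tau(am)=\alpha(a)m$ for all $a,m$. The hypotheses immediately give that $\beta_{2}$ is a left multiplier (since $\B$ is \textbf{SLIP}) and that $\beta_{1}$ is a left multiplier (since local left multipliers $\B\rightarrow\M$ are left multipliers). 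The remaining work is to pin down $\alpha$ and $\tau$.

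First I would extract the key consequence of $l.ann_{\A}\M=\lbrace0\rbrace$: I claim it forces $\alpha$ to be a left multiplier on $\A$. Since $\psi$ is \textbf{LIP}, hence a local left multiplier, for each $a\in\A$ there is $X_{T}=\begin{pmatrix} a_{T} & m_{T} \\ 0 & b_{T}\end{pmatrix}$ with $\psi\begin{pmatrix} a & 0 \\ 0 & 0\end{pmatrix}=X_{T}\begin{pmatrix} a & 0 \\ 0 & 0\end{pmatrix}$, so $\alpha(a)=a_{T}a$, i.e.\ $\alpha$ is a local left multiplier $\A\rightarrow\A$. That alone need not make $\alpha$ a left multiplier ($\A$ is not assumed \textbf{SLIP}), so I instead use the relation $\tau(am)=\alpha(a)m$ together with the fact that $\tau$ is itself a local left multiplier on the $\A$-module $\M$ (from Lemma~\ref{lip}, $\tau(m)=m_{S}m$ for suitable $m_{S}$). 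Applying the local-left-multiplier property of $\psi$ to $\begin{pmatrix} a & 0 \\ 0 & 0\end{pmatrix}$ and multiplying on the right by $\begin{pmatrix} 1 & m \\ 0 & 0\end{pmatrix}$ for $m\in\M$ should yield $\alpha(a)m=\tau(am)$ and then compare $\tau$ evaluated via $X_{I}$: the point is to show $\alpha(a)m$ depends on $a$ only through $\alpha(1)a$. Concretely, for $a\in\A$, $(\alpha(a)-\alpha(1)a)m = \tau(am)-\alpha(1)am$; using $\tau$'s local multiplier form on $\M$ one checks $\tau(am)=\alpha(1)am$ after identifying $\tau(1\cdot m)=\alpha(1)m$ via the block computation $\psi\begin{pmatrix} 1 & m \\ 0 & 0\end{pmatrix}=\psi(I)\cdot(\text{something})$ — more carefully, from $\psi$ being \textbf{LIP} applied to the left ideal $\begin{pmatrix} 0 & \M \\ 0 & 0\end{pmatrix}$ and the identity $\tau(am)=\alpha(a)m$ with $a=1$, get $\tau(m)=\alpha(1)m$, hence $\tau(am)=\alpha(1)am$. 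Therefore $(\alpha(a)-\alpha(1)a)\M=\lbrace0\rbrace$, and since $l.ann_{\A}\M=\lbrace0\rbrace$ we conclude $\alpha(a)=\alpha(1)a$ for all $a\in\A$, i.e.\ $\alpha$ is a left multiplier.

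Once $\alpha(a)=\alpha(1)a$, $\tau(m)=\alpha(1)m$, $\beta_{1}(b)=\beta_{1}(1)b$, and $\beta_{2}(b)=\beta_{2}(1)b$, it remains to check that $\psi(I)$ assembles these correctly, i.e.\ $\psi(T)=\psi(I)T$ for all $T=\begin{pmatrix} a & m \\ 0 & b\end{pmatrix}$. Computing $\psi(I)=\begin{pmatrix} \alpha(1) & \beta_{1}(1) \\ 0 & \beta_{2}(1)\end{pmatrix}$ and multiplying out $\psi(I)T=\begin{pmatrix} \alpha(1)a & \alpha(1)m+\beta_{1}(1)b \\ 0 & \beta_{2}(1)b\end{pmatrix}$, this matches $\psi(T)=\begin{pmatrix} \alpha(a) & \tau(m)+\beta_{1}(b) \\ 0 & \beta_{2}(b)\end{pmatrix}$ precisely because of the four identities just established. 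Hence $\psi$ is a left multiplier and $\T$ is \textbf{SLIP}.

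The main obstacle is the middle step: squeezing a genuine left-multiplier conclusion for $\alpha$ out of the local-multiplier data, since $\A$ itself carries no \textbf{SLIP} hypothesis. The leverage is entirely the faithfulness condition $l.ann_{\A}\M=\lbrace0\rbrace$, which converts the module identity $(\alpha(a)-\alpha(1)a)\M=\lbrace0\rbrace$ into the algebra identity $\alpha(a)=\alpha(1)a$; the delicate part is establishing $\tau(am)=\alpha(1)am$ cleanly, for which one wants to first nail down $\tau(m)=\alpha(1)m$ by testing $\psi$ against well-chosen rank-one-type idempotent products in $\T$ (e.g.\ pairs $T,S$ with $TS=0$ of the form used in the proof of Lemma~\ref{lip}) rather than by any direct computation with a global preimage.
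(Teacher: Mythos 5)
Your proposal is correct and follows essentially the same route as the paper: both decompose $\psi$ via Lemma~\ref{lip}, dispose of $\beta_{1},\beta_{2}$ by the hypotheses on $\B$, and then exploit the relation $\tau(xm)=\alpha(x)m$ for two factorizations of the same element of $\M$ together with $l.ann_{\A}\M=\lbrace 0\rbrace$ to force $\alpha(a)=\alpha(1)a$ and $\tau(m)=\alpha(1)m$. The only cosmetic difference is that you specialize to $a=1$ before comparing, whereas the paper first derives $\alpha(aa')=\alpha(a)a'$ and then specializes; the substance is identical.
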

\begin{proof}
Suppose that $\psi:\T\rightarrow \T$ be a \textbf{LIP} map. By Lemma \ref{lip}
\[\psi(\begin{pmatrix}
  a & m \\
  0 & b
\end{pmatrix})=\begin{pmatrix}
  \alpha(a) & \beta_{1}(b)+\tau(m) \\
  0 & \beta_{2}(b)
\end{pmatrix},\]
where $\beta_{1}:\B\rightarrow \M$ and $\beta_{2}:\B\rightarrow \B$ are local left multipliers and $\tau(am)=\alpha(a)m$ for all $a\in\A$ and $m\in\M$. By hypothesis 
\[ \beta_{1}(b)=\beta_{1}(1)b, \quad \beta_{2}(b)=\beta_{2}(1)b \quad (b\in \B).\]
For every $a, a^{\prime}\in \A$ and $m\in \M$ we have
\[ \tau(aa^{\prime}m)=\alpha(aa^{\prime})m.\]
On the other hand,
\[ \tau(aa^{\prime}m)=\alpha(a)a^{\prime}m.\]
By comparing the two expressions for $\tau(aa^{\prime}m)$ and the hypothesis $l.ann_{\A}\M =\lbrace 0 \rbrace$, we arrive at $\alpha(aa^{\prime})=\alpha(a)a^{\prime}$. So
\[ \alpha(a)=\alpha(1)a, \quad \tau(m)=\alpha(1)m \quad (a\in\A, m\in \M).\]
Hence from these equations
\[ \psi(T)=\psi(I)T,\]
for all $T\in \T$.
\end{proof}
Let $\X$ be an $\rr$-module. It is obvious that each $\rr$-linear local left multiplier from $\rr$ into $\X$ is a left multiplier. So by Theorem \ref{suslip1} we have the next corollary.
\begin{cor}\label{c1}
Let $\A$ be an algebra (over $\rr$). Then the triangular algebra 
\[Tri(\A,\A,\rr)=\begin{pmatrix}
  \A & \A \\
  0 & \rr
\end{pmatrix}\]
is a \textbf{SLIP} algebra. 
\end{cor}
Let $\M$ be a right $\B$-module. Denote the algebra of all $\B$-module endomorphisms of $\M$ by $End_{\B} (\M)$. Let $\A=End_{\B} (\M)$. Then $\M$ is an $(\A,\B)$-bimodule equipped with $\phi · m := \phi(m)$ ($m \in \M, \phi\in \A$) and $l.ann_{\A}\M =\lbrace 0 \rbrace$. So by Theorem \ref{nec} and Theorem \ref{suslip1} we conclude the following corollary.
\begin{cor}\label{c2}
 Let $\M$ be a right $\B$-module. Then the triangular algebra 
\[Tri(End_{\B} (\M),\M,\B)=\begin{pmatrix}
  End_{\B} (\M) & \M \\
  0 & \B
\end{pmatrix}\]
is a \textbf{SLIP} algebra if and only if $\B$ is a \textbf{SLIP} algebra and every local left multiplier from $\B$ into $\M$ is a left multiplier.
\end{cor}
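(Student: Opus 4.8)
The plan is to show that this corollary is a direct consequence of Theorem~\ref{nec} and Theorem~\ref{suslip1}, once one records the relevant module-theoretic facts about the $(\A,\B)$-bimodule structure on $\M$ when $\A = End_{\B}(\M)$.

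First I would verify the setup. The algebra $\A = End_{\B}(\M)$ is a unital $\rr$-algebra with identity $\mathrm{id}_{\M}$; the rule $\phi\cdot m := \phi(m)$ makes $\M$ a unital left $\A$-module, and since each $\phi\in\A$ is a $\B$-module map this left action commutes with the right $\B$-action, so $\M$ is a unital $(\A,\B)$-bimodule and $\T := Tri(End_{\B}(\M),\M,\B)$ is a genuine triangular algebra to which the results of this section apply. The crucial observation is that $l.ann_{\A}\M = \lbrace 0 \rbrace$: if $\phi\in\A$ satisfies $\phi(m) = 0$ for every $m\in\M$, then $\phi = 0$ by the very definition of an endomorphism. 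This is the only point that genuinely needs checking, and it is immediate.

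For the \emph{only if} direction, assume $\T$ is a \textbf{SLIP} algebra. Then Theorem~\ref{nec} applies and yields that $\B$ is a \textbf{SLIP} algebra and that every local left multiplier from $\B$ into $\M$ is a left multiplier. For the \emph{if} direction, assume $\B$ is a \textbf{SLIP} algebra and every local left multiplier from $\B$ into $\M$ is a left multiplier. Combining these two hypotheses with the fact $l.ann_{\A}\M = \lbrace 0 \rbrace$ established above, all three hypotheses of Theorem~\ref{suslip1} are satisfied, so $\T = Tri(End_{\B}(\M),\M,\B)$ is a \textbf{SLIP} algebra.

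There is no real obstacle here: the content of the corollary is precisely that the $End$-construction forces $l.ann_{\A}\M$ to vanish, which is exactly the extra ingredient needed to upgrade the necessary condition of Theorem~\ref{nec} into the sufficient condition of Theorem~\ref{suslip1}, so that the two conditions on $\B$ match on the nose and the equivalence follows. The only mild care is in confirming that the bimodule is unital and faithful so that the general theorems apply; it is also worth noting that $\A = End_{\B}(\M)$ need not itself be a \textbf{SLIP} algebra, so this construction genuinely produces \textbf{SLIP} triangular algebras beyond those covered by Proposition~\ref{trzp}.
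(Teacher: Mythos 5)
Your proposal is correct and follows exactly the paper's argument: the paper likewise observes that $\phi\cdot m:=\phi(m)$ makes $\M$ an $(End_{\B}(\M),\B)$-bimodule with $l.ann_{End_{\B}(\M)}\M=\lbrace 0\rbrace$, and then cites Theorem~\ref{nec} for necessity and Theorem~\ref{suslip1} for sufficiency. No gaps.
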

In the following theorem we don't require the condition $l.ann_{\A}\M =\lbrace 0 \rbrace$.
\begin{thm}\label{suslip2}
Let $\T=Tri(\A,\M,\B)$ be a triangular algebra. Let $\A$ and $\B$ be \textbf{SLIP} algebras and every local left multiplier from $\B$ into $\M$ is a left multiplier. Then $\T$ is a \textbf{SLIP} algebra.
\end{thm}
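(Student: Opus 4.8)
The plan is to reduce everything to the structure theorem already in hand, Lemma \ref{lip}, together with the definition of \textbf{SLIP}. So I would start with an arbitrary \textbf{LIP} map $\psi:\T\rightarrow\T$ and write it, via Lemma \ref{lip}, as
\[
\psi\begin{pmatrix} a & m \\ 0 & b\end{pmatrix}=\begin{pmatrix} \alpha(a) & \beta_{1}(b)+\tau(m) \\ 0 & \beta_{2}(b)\end{pmatrix},
\]
where $\alpha:\A\rightarrow\A$ and $\beta_{2}:\B\rightarrow\B$ are \textbf{LIP}, $\beta_{1}:\B\rightarrow\M$ is a local left multiplier, and $\tau(am)=\alpha(a)m$ for all $a\in\A$, $m\in\M$. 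The goal is then just to show that each of $\alpha,\beta_{1},\beta_{2},\tau$ is the appropriate ``multiplication by the image of the unit'' map, so that $\psi$ coincides with left multiplication by $\psi(I)$.

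For the three outer pieces this should be immediate from the hypotheses: $\alpha$ is \textbf{LIP} and $\A$ is \textbf{SLIP}, hence $\alpha(a)=\alpha(1)a$; $\beta_{2}$ is \textbf{LIP} and $\B$ is \textbf{SLIP}, hence $\beta_{2}(b)=\beta_{2}(1)b$; and $\beta_{1}$ is a local left multiplier from $\B$ into $\M$, hence a left multiplier by assumption, so $\beta_{1}(b)=\beta_{1}(1)b$.

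The one step that replaces the condition $l.ann_{\A}\M=\lbrace 0\rbrace$ of Theorem \ref{suslip1} is the treatment of $\tau$. Rather than cancelling a left annihilator, I would simply specialize the identity $\tau(am)=\alpha(a)m$ at $a=1_{\A}$: since $\M$ is a unital $(\A,\B)$-bimodule we obtain $\tau(m)=\tau(1_{\A}m)=\alpha(1)\,1_{\A}m=\alpha(1)m$ for every $m\in\M$. This works precisely because $\alpha$ has already been identified as a left multiplier, so no faithfulness of $\M$ is needed. Substituting the four formulas back and computing $\psi(I)=\begin{pmatrix}\alpha(1)&\beta_{1}(1)\\0&\beta_{2}(1)\end{pmatrix}$, a direct matrix multiplication gives $\psi(T)=\psi(I)T$ for all $T\in\T$, so $\psi$ is a left multiplier and $\T$ is \textbf{SLIP}.

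As for the main obstacle: there is no serious one once Lemma \ref{lip} is available — the statement is essentially a bookkeeping corollary of that lemma. The only place demanding a little attention is the specialization argument for $\tau$, i.e. observing that unitality of the bimodule $\M$ is exactly what lets one dispense with the annihilator hypothesis that was needed in Theorem \ref{suslip1}.
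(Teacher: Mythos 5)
Your proposal is correct and follows the paper's own argument essentially verbatim: apply Lemma \ref{lip}, use the \textbf{SLIP} hypotheses on $\A$ and $\B$ and the assumption on maps $\B\rightarrow\M$ to identify $\alpha$, $\beta_{1}$, $\beta_{2}$ as left multipliers, and obtain $\tau(m)=\alpha(1)m$ by setting $a=1$ in $\tau(am)=\alpha(a)m$. Your explicit remark that unitality of $\M$ is what replaces the annihilator hypothesis of Theorem \ref{suslip1} is exactly the (implicit) point of the paper's proof.
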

\begin{proof}
Suppose that $\psi:\T\rightarrow \T$ be a \textbf{LIP} map. By Lemma \ref{lip}
\[\psi(\begin{pmatrix}
  a & m \\
  0 & b
\end{pmatrix})=\begin{pmatrix}
  \alpha(a) & \beta_{1}(b)+\tau(m) \\
  0 & \beta_{2}(b)
\end{pmatrix},\]
where $\alpha:\A\rightarrow\A$ and $\beta_{2}:\B\rightarrow\B$ are \textbf{LIP} maps, $\beta_{1}:\B\rightarrow\M$ is a local left multiplier and $\tau:\M\rightarrow\M$ satisfies $\tau(am)=\alpha(a)m$ for all $a\in\A$ and $m\in\M$. By hypothesis 
\[ \alpha(a)=\alpha(1)a, \,\,\beta_{1}(b)=\beta_{1}(1)b, \,\, \beta_{2}(b)=\beta_{2}(1)b \,\, \text{and} \,\, \tau(m)=\alpha(1)m \quad ,\]
for all $a\in\A,b\in \B,m\in\M$. Therefore from these equations
\[ \psi(T)=\psi(I)T,\]
for all $T\in \T$.
\end{proof}
Now some examples are given which illustrate limitations on extending some of the theory developed and and these examples show that the classes of triangular algebras which are satisfying in conditions of Theorem \ref{suslip1} or Theorem \ref{suslip2} are different from each other. So we firstly need to obtain some algebras which are not \textbf{SLIP} algebras. 
\begin{rem}\label{div}
Every division \textbf{SLIP} algebra $\A$ is a field. Consider the arbitrary elements $a,b\in\A$ and define the $\rr$-linear map $\psi_{b}(a)=ab$. Since $\A$ is a division algebra, it follows that $\psi_{b}$ is a \textbf{LIP} map. So by the hypothesis that $\A$ is \textbf{SLIP}, we have  $ab=\psi_{b}(a)=\psi_{b}(1)a=ba$. Hence $\A$ is commutative. 
\end{rem}
From Remark \ref{div}, it is concluded that the quaternion algebra $\mathbb{H}(\mathbb{R})$ over the real field $\mathbb{R}$ is not a \textbf{SLIP} algebra. In the following example a non-division algebra is provided which is not \textbf{SLIP}. This example is given in \cite[Example 2.4]{ka}.
\begin{ex}\label{kak}
Let $\mathbb{F}$ be a field. Then
\[\mathcal{U}:=\bigg \{ \begin{pmatrix}
  \lambda & \mu \\
  0 & \lambda
\end{pmatrix}\,\bigg | \, \lambda,\mu \in\mathbb{F} \bigg \}\]
is an algebra over $\mathbb{F}$. This algebra is not \textbf{SLIP}. 
\end{ex}
Now by Corollary \ref{c1} and these examples we can get a \textbf{SLIP} triangular algebra $Tri(\A,\M,\B)$ such that $\A$ is not a \textbf{SLIP} algebra. Hence $Tri(\A,\M,\B)$ is not a zero product determined algebra (by Theorem \ref{zpd} and \cite[Theorem 2.1]{gh2}).
\begin{ex}
Let $\T$ be any of the following triangular algebras.
\[Tri( \mathbb{H}(\mathbb{R}) , \mathbb{H}(\mathbb{R}) ,  \mathbb{R}) \quad \text{or} \quad  Tri(\mathcal{U},\mathcal{U},  \mathbb{F}),\]
where $\mathcal{U}$ is the algebra described in Example \ref{kak}.
Then by Corollary \ref{c1}, $\T$ is a \textbf{SLIP} algebra but $\mathbb{H}(\mathbb{R})$ and $\mathcal{U}$ are not \textbf{SLIP} algebras.
\end{ex}
In this example $\T$ satisfies in conditions of Theorem \ref{suslip1} but doesn't satisfy in conditions of Theorem \ref{suslip2}. This example also shows that the converse of Theorem \ref{suslip2} is not necessarily true. 
\par 
The next example shows that the converse of Theorem \ref{suslip1} is not necessarily true.
\begin{ex}\label{e3}
Assume that $\A$ is a zero product determined algebra. By \cite[Proposition 2.8]{gh3}, $\A \times \A$ is a zero product determined algebra. By the following make $\A$ into an $((\A \times \A),\A)$-bimodule: 
\[(a,b)x:=ax \quad (a,b,x\in\A), \]
and the right multiplication is the usual multiplication of $\A$. By Proposition \ref{trzp}-$(i)$, the triangular algebra $\T=Tri(\A \times \A, \A, \A)$ is a \textbf{SLIP} algebra while $l.ann_{\A \times \A}\A=\lbrace 0 \rbrace \times \A \neq \lbrace 0 \rbrace $.
\end{ex}
In Example \ref{e3}, $\T$ satisfies in conditions of Theorem \ref{suslip2} but doesn't satisfy in conditions of Theorem \ref{suslip1}.
\par 
In the following example we show that the conditions on $\A$ in Theorems \ref{suslip1} and \ref{suslip2} are not superfluous.
\begin{ex}
Let $\A$ and $\B$ be algebras such that $\A$ is a zero product determined algebra and $\B$ is not a \textbf{SLIP} algebra. By the following make $\A$ into an $((\A \times \B),\A)$-bimodule: 
\[(a,b)x:=ax \quad (a,x\in\A,b\in\B), \]
and the right multiplication is the usual multiplication of $\A$. Now consider the triangular algebra $\T=Tri(\A \times \B,\A,\A)$ and we show that $\T$ is not a \textbf{SLIP} algebra. Since $\B$ is not a \textbf{SLIP} algebra, there is a local left multiplier $\rho:\B\rightarrow \B$ which is not a left multiplier. So for any $b\in \B$ there exist an element $c_{b}\in\B$, such that $\rho(b)=c_{b}b$. Define the $\rr$-linear map $\psi:\T\rightarrow \T$ by
\[ \psi(\begin{pmatrix}
  (a_{1},b) & a_{2} \\
  0 & a_{3}
\end{pmatrix})=\begin{pmatrix}
  (0,\rho(b)) & 0 \\
  0 & 0
\end{pmatrix}.\]
For any $T=\begin{pmatrix}
  (a_{1},b) & a_{2} \\
  0 & a_{3}
\end{pmatrix}\in \T$, there is $X_{T}=\begin{pmatrix}
  (0,c_{b}) & 0 \\
  0 & 0
\end{pmatrix}$ such that 
\[\psi(T)=X_{T}T.\]
So $\psi$ is a \textbf{LIP} map which is not a left multiplier. Otherwise, if $\psi(T)=\psi(I)T$ for all $T\in\T$, then $\rho$ is a left multiplier and this is a contradiction. In this example $l.ann_{\A \times \B}\A=\lbrace 0 \rbrace \times \B \neq \lbrace 0 \rbrace $ and by \cite[Lemma 5]{had}, $\A \times \B$ is not a \textbf{SLIP} algebra.
\end{ex}
In this example we can let the algebra $\A$ as any of the algebras in Corollary \ref{mn} and the algebra $\B$ as the quaternion algebra $\mathbb{H}(\mathbb{R})$ or the algebra $\mathcal{U}$ in Example \ref{kak}.
\section{Applications to generalized triangular matrix algebras and block upper triangular matrix algebras }
In this section our previous results in section 3 are applied to generalized triangular matrix algebras and block upper triangular matrix algebras. 
\begin{thm}\label{gn1}
Let the algebra $\A$ has a generalized triangular matrix representation 
\[  \A = \begin{pmatrix}
  \A_{11} & \A_{12} & \cdots & \A_{1n} \\
  0 & \A_{22} & \cdots & \A_{2n} \\
  \vdots & \vdots & \ddots  & \vdots \\
  0 & 0 & \cdots & \A_{nn}
\end{pmatrix},\]
where each $\A_{ii}$ is an algebra with unity and $\A_{ij}$ is a
$(\A_{i},\A_{j})$-bimodule for $i< j$. If any local left multiplier from $\A_{ii}$ ($1\leq i \leq n$) into $\A_{ki}$ ($1\leq k \leq i$) is a left multiplier, then $\A$ is a \textbf{SLIP} algebra.
\end{thm}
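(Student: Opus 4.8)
The natural approach is induction on $n$, the number of diagonal blocks, using the triangular-algebra sufficiency results (Theorem~\ref{suslip1} or Theorem~\ref{suslip2}) together with Remark~\ref{tt}. The base case $n=1$ is trivial: then $\A=\A_{11}$ and the hypothesis (with $i=k=1$) says every local left multiplier $\A_{11}\to\A_{11}$ is a left multiplier, which is exactly the statement that $\A_{11}$ is a \textbf{SLIP} algebra. For the inductive step, I would peel off the \emph{last} diagonal block: by Remark~\ref{tt}, with $f=1-e_n$ (where $e_n$ corresponds to $E_n$), we have $e_n\in\mathcal{S}_l(\A)$ is \emph{not} quite what we want, but rather $f\in\mathcal{S}_l(\A)$, giving the triangular representation $\A\cong Tri(f\A f,\ f\A e_n,\ e_n\A e_n)$, where $f\A f$ is the generalized triangular matrix algebra on the first $n-1$ diagonal blocks, $e_n\A e_n\cong\A_{nn}$, and the bimodule $f\A e_n$ is the column $(\A_{1n},\dots,\A_{n-1,n})^t$.

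To apply Theorem~\ref{suslip2} to this triangular decomposition I need three things: (1) $f\A f$ is \textbf{SLIP}; (2) $\A_{nn}=e_n\A e_n$ is \textbf{SLIP}; (3) every local left multiplier from $e_n\A e_n$ into $f\A e_n$ is a left multiplier. For (2), the hypothesis with $i=k=n$ gives exactly that $\A_{nn}$ is \textbf{SLIP}. For (1), I would verify that $f\A f$ inherits the hypothesis of the theorem: its generalized triangular representation has diagonal blocks $\A_{11},\dots,\A_{n-1,n-1}$ and off-diagonal blocks $\A_{ij}$ for $i<j\le n-1$, and the required property "every local left multiplier from $\A_{ii}$ into $\A_{ki}$ ($k\le i$) is a left multiplier" is a subset of the hypotheses we already have (only indices with $i\le n-1$ are involved), so the inductive hypothesis applies and $f\A f$ is \textbf{SLIP}. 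For (3), a local left multiplier $\beta\colon e_n\A e_n\to f\A e_n$ decomposes coordinatewise into maps $\beta_k\colon\A_{nn}\to\A_{kn}$ for $1\le k\le n-1$; each $\beta_k$ is a local left multiplier from $\A_{nn}=\A_{ii}$ (with $i=n$) into $\A_{kn}=\A_{ki}$ (with $k\le n=i$), hence a left multiplier by hypothesis, and assembling the coordinates shows $\beta$ is a left multiplier.

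With (1), (2), (3) in hand, Theorem~\ref{suslip2} yields that $Tri(f\A f,\ f\A e_n,\ e_n\A e_n)\cong\A$ is a \textbf{SLIP} algebra, completing the induction.

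\textbf{Main obstacle.} The genuinely delicate point is the bookkeeping in step (3) and in the verification that $f\A f$'s generalized-triangular-matrix structure (as furnished by Remark~\ref{tt}) really does have its off-diagonal blocks and bimodule actions matching $\A_{ij}$ for $i<j\le n-1$, so that the hypothesis transfers cleanly to the induction. One must be careful that "local left multiplier into a column module" decomposes into "local left multipliers into each component" — this uses that the element $X_a$ witnessing $\beta(a)=X_a\cdot a$ can be projected componentwise, which is routine but must be stated. A secondary subtlety is the choice of which block to strip: stripping the last block (using $f=1-e_n$, $f\in\mathcal S_l(\A)$ via \cite[Lemma 1.2]{Bir2}) keeps the diagonal blocks in their original positions and makes the index condition "$1\le k\le i$" transfer verbatim, whereas stripping the first block would reindex everything and require more care. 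I expect no analytic difficulty — the whole proof is structural, and the only risk is an indexing slip.
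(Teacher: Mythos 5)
Your proposal is correct, but it takes a genuinely different decomposition from the paper's. The paper strips off the \emph{first} diagonal block: it writes $\A\cong Tri(E_{1}\A E_{1},\,E_{1}\A F,\,F\A F)$ with $F=I-E_{1}$, gets $\A_{11}$ \textbf{SLIP} from the hypothesis and $F\A F$ \textbf{SLIP} by induction, and then spends the bulk of the proof (a four-step computation) showing that every local left multiplier from the $(n-1)$-block triangular algebra $F\A F$ into the row module $E_{1}\A F\cong(\A_{12},\dots,\A_{1n})$ is a left multiplier; this requires decomposing such a map into components $\phi_{ij}^{k}:\A_{ij}\to\A_{1k}$, proving the off-diagonal components vanish via carefully chosen zero products, and only then invoking the hypothesis (with $k=1$) on the surviving diagonal components, before concluding with Theorem~\ref{suslip2}. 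You instead strip off the \emph{last} block via $f=1-e_{n}$, which Remark~\ref{tt} justifies, so the bimodule condition in Theorem~\ref{suslip2} concerns local left multipliers from the single corner algebra $\A_{nn}$ into the column $f\A e_{n}$; these decompose componentwise into maps $\A_{nn}\to\A_{kn}$ with no vanishing argument needed, each directly covered by the hypothesis with $i=n$. Your route is shorter and avoids the delicate Steps 1--4 entirely; it is essentially the computation the paper carries out separately in the proof of Theorem~\ref{gn3}. Both inductions consume exactly the same instances of the hypothesis, and your observation that the index condition $1\le k\le i$ transfers verbatim to $f\A f$ is correct, so the argument is complete.
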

\begin{proof}
We denote the elements of $\A_{ij}$ by $a_{ij}$, $1_{i}$ for the unity of $\A_{ii}$ and $a_{ij}E_{ij}$ for the $n$-by-$n$ matrix with the $a_{ij}\in \A_{ij}$ at the $(i, j)$-entry and $0$ in all other entries. Also $E_{i}$ denotes the matrix $1_{i}E_{ii}$. The set $\lbrace E_{1},\cdots, E_{n}\rbrace$ is a set of left triangulating idempotents of $\A$ (by Proposition \ref{Bi}).
\par 
The proof is by induction on n. If $n=1$, then $\A=\A_{11}$ and the result is obvious in this case.
\par 
Assume $n\geq 2$ and for each algebra that has a set of left triangulating idempotents with $n-1$ elements, the result is true.
\par 
Let $\A$ has a set of left triangulating idempotents  $\lbrace E_{1},\cdots, E_{n}\rbrace$. By Remark \ref{tt}, $\A\cong Tri(E_{1}\A E_{1}, E_{1}\A F, F\A F)$ and $\lbrace E_{2},\cdots, E_{n}\rbrace$ is a set of left triangulating idempotents of $F\A F$, where $F=I-E_{1}=\sum_{i=2}^{n}E_{i}$ is the unity of $F\A F$. Also we have the $\rr$-algebra isomorphisms:
\[E_{1}\A E_{1}\cong \A_{11}, \quad F\A F \cong \begin{pmatrix}
    \A_{22} & \cdots & \A_{2n} \\
   \vdots & \ddots  & \vdots \\
   0 & \cdots & \A_{nn}
\end{pmatrix},\]
and the $(\A_{11},F\A F)$-bimodule isomorphism:
\[ E_{1}\A F \cong (\A_{12} , \cdots , \A_{1n}).\]
From the hypothesis $\A_{11}$ is a \textbf{SLIP} algebra. Also by induction hypothesis $F\A F$ is a \textbf{SLIP} algebra. Let $\psi: F\A F \rightarrow E_{1}\A F$ be a local left multiplier, we show that $\psi$ is a left multiplier. 
\par 
Since $\psi$ is an $\rr$-linear map, there exist $\rr$-linear maps $\phi_{ij}^{k}:\A_{ij}\rightarrow \A_{1k}$ such that
\[\psi(a_{ij}E_{ij})=\sum_{k=2}^{n}\phi_{ij}^{k}(a_{ij})E_{1k},\]
where $2\leq i \leq j \leq n$ and $2\leq k \leq n$. 
\par 
We complete the proof by checking some steps.
\\
\textit{Step 1.} $\phi_{ii}^{k}=0$ for all $2\leq i,k \leq n$ with $i\neq k$.
\\
\par 
For each $a_{ii}\in \A_{ii}$ ($2\leq i \leq n$), let $T=a_{ii}E_{i}$. Since $\psi$ is local left multiplier, there is $X_{T}\in E_{1}\A F$ such that $\psi(T)=X_{T}T$. Now $T(F-E_{i})=0$ and hence $\psi(T)(F-E_{i})=0$. So
\[0=(\sum_{k=2}^{n}\phi_{ii}^{k}(a_{ii})E_{1k})(F-E_{i})=\sum_{k=2}^{n}\phi_{ii}^{k}(a_{ii})E_{1k}-\phi_{ii}^{i}(a_{ii})E_{1i}.\]
Therefore for all $2\leq i,k \leq n$ with $i\neq k$, we have $\phi_{ii}^{k}=0$.
\\
\textit{Step 2.} $\phi_{ij}^{k}=0$ for all $2\leq i< j \leq n$ , $2\leq k \leq n$ with $j\neq k$.
\\
\par
For each $a_{ij}\in \A_{ij}$ $(2\leq i< j  \leq n)$, let $T=a_{ij}E_{ij}$. For any $2\leq k \leq n$ with $j\neq k$, we have $TE_{k}=0$. So by a similar argument as in Step 1 $\psi(T)E_{k}=0$. Hence
\[ 0=(\sum_{l=2}^{n}\phi_{ij}^{l}(a_{ij})E_{1l})E_{k}=\phi_{ij}^{k}(a_{ij})E_{1k}.\]
The result now follows from the above equation.
\\
\textit{Step 3.} $\phi_{ij}^{j}(a_{ij})E_{1j}=\phi_{ii}^{i}(1_{i})E_{1i}a_{ij}E_{ij}$ for all $2\leq i< j \leq n$ and $a_{ij} \in \A_{ij}$ .
\\
\par
For all $a_{ij} \in \A_{ij}$, we have $(E_{i}+a_{ij}E_{ij})(-a_{ij}E_{ij}+E_{j})=0$. So $\psi(E_{i}+a_{ij}E_{ij})(-a_{ij}E_{ij}+E_{j})=0$. By Steps 1, 2 we see that $\psi(E_{i})=\phi_{ii}^{i}(1_{i})E_{1i}$ and $\psi(a_{ij}E_{ij})=\phi_{ij}^{j}(a_{ij})E_{1j}$. So
\[ (\phi_{ii}^{i}(1_{i})E_{1i} +\phi_{ij}^{j}(a_{ij})E_{1j} )(-a_{ij}E_{ij}+E_{j})=0,\]
and hence
\[ \phi_{ij}^{j}(a_{ij})E_{1j}=\phi_{ii}^{i}(1_{i})E_{1i}a_{ij}E_{ij}.\]
\\
\textit{Step 4.} $\phi_{ii}^{i}(a_{ii})E_{1i}=\phi_{ii}^{i}(1_{i})E_{1i}a_{ii}E_{i}$ for all $2\leq i \leq n$ and $a_{ii} \in \A_{ii}$ .
\\
\par
For each $a_{ii}\in \A_{ii}$ ($2\leq i \leq n$), let $T=a_{ii}E_{i}$. Since $\psi$ is local left multiplier, there is $X_{T}=\sum_{k=2}^{n}x_{1k}E_{1k}$ such that $\psi(T)=X_{T}T$. So by Step 1 
\[\phi_{ii}^{i}(a_{ii})E_{1i} =x_{1i}a_{ii}E_{1i}\]
 and hence $\phi_{ii}^{i}$ is a local left multiplier. From hypothesis $\phi_{ii}^{i}(a_{ii})=\phi_{ii}^{i}(1_{i})a_{ii}$. Hence
\[\phi_{ii}^{i}(a_{ii})E_{1i}=\phi_{ii}^{i}(1_{i})E_{1i}a_{ii}E_{i}.\]
By Steps 1--4, it follows that $\psi(T)=\psi(F)T$ for all $T\in F\A F$. So $\psi$ is a left multiplier.
\par 
Now from Theorem \ref{suslip2}, it follows that the algebra $\A$ is a \textbf{SLIP} algebra.
\end{proof}
By the following theorems we will see the condition that $\A_{kk}$ ($1\leq k \leq n-1$) be \textbf{SLIP} algebras is not necessary for that $\A$ be a \textbf{SLIP} algebra.
\begin{thm}\label{gn2}
Suppose that the algebra $\A$ has a generalized triangular matrix representation 
\[  \A = \begin{pmatrix}
  \A_{11} & \A_{12} & \cdots & \A_{1n} \\
  0 & \A_{22} & \cdots & \A_{2n} \\
  \vdots & \vdots & \ddots  & \vdots \\
  0 & 0 & \cdots & \A_{nn}
\end{pmatrix}.\]
 Let any local left multiplier from $\A_{ii}$ ($2\leq i \leq n$) into $\A_{ki}$ ($1\leq k \leq i$) is a left multiplier, and for some $2\leq k \leq n$, $l.ann_{\A_{11}}\A_{1k}=\lbrace 0 \rbrace$. Then $\A$ is a \textbf{SLIP} algebra.
\end{thm}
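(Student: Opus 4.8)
The plan is to imitate the inductive/structural strategy of Theorem \ref{gn1}, but to replace the appeal to Theorem \ref{suslip2} at the end by an appeal to Theorem \ref{suslip1}, since we no longer assume that $\A_{11}$ is a \textbf{SLIP} algebra and instead have the left-annihilator hypothesis $l.ann_{\A_{11}}\A_{1k}=\{0\}$ for some $k$. As in the proof of Theorem \ref{gn1}, I would use Remark \ref{tt} to write $\A\cong Tri(E_{1}\A E_{1},E_{1}\A F,F\A F)$ with $E_{1}\A E_{1}\cong\A_{11}$, $F\A F\cong\begin{pmatrix}\A_{22}&\cdots&\A_{2n}\\ &\ddots&\vdots\\ 0&\cdots&\A_{nn}\end{pmatrix}$, and $E_{1}\A F\cong(\A_{12},\cdots,\A_{1n})$ as an $(\A_{11},F\A F)$-bimodule. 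The hypothesis on the $\A_{ii}$ with $2\le i\le n$ is exactly what is needed to run the inductive hypothesis, so $F\A F$ is a \textbf{SLIP} algebra.

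Next I would verify the two remaining hypotheses of Theorem \ref{suslip1} for the triangular decomposition $Tri(\A_{11},E_{1}\A F,F\A F)$, namely (a) every local left multiplier from $F\A F$ into $E_{1}\A F$ is a left multiplier, and (b) $l.ann_{\A_{11}}(E_{1}\A F)=\{0\}$. Part (a) is proved by exactly the four-step computation ``Steps 1--4'' appearing in the proof of Theorem \ref{gn1}: writing a local left multiplier $\psi\colon F\A F\to E_{1}\A F$ via coefficient maps $\phi_{ij}^{k}\colon\A_{ij}\to\A_{1k}$, one shows $\phi_{ii}^{k}=0$ for $i\neq k$, $\phi_{ij}^{k}=0$ for $j\neq k$, then $\phi_{ij}^{j}(a_{ij})E_{1j}=\phi_{ii}^{i}(1_{i})E_{1i}a_{ij}E_{ij}$, and finally (using the hypothesis that local left multipliers $\A_{ii}\to\A_{1i}$ are left multipliers) that $\phi_{ii}^{i}(a_{ii})=\phi_{ii}^{i}(1_{i})a_{ii}$; together these give $\psi(T)=\psi(F)T$. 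This step is verbatim the argument already carried out, so I would simply cite it rather than repeat it. Part (b) is where the new hypothesis enters: if $a\in\A_{11}$ kills all of $E_{1}\A F\cong(\A_{12},\ldots,\A_{1n})$, then in particular $a\A_{1k}=\{0\}$, so $a\in l.ann_{\A_{11}}\A_{1k}=\{0\}$; hence $l.ann_{\A_{11}}(E_{1}\A F)=\{0\}$.

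With (a), (b), and ``$F\A F$ is \textbf{SLIP}'' in hand, Theorem \ref{suslip1} applied to $Tri(\A_{11},E_{1}\A F,F\A F)$ yields that $\A$ is a \textbf{SLIP} algebra, completing the induction. I would still need the base case: for $n=1$ there is nothing above the diagonal and the left-annihilator hypothesis is vacuous, but then $\A=\A_{11}$ and one applies the $i=1$ instance of the hypothesis directly—so really the induction should start at $n=2$, with the $n=2$ case being Theorem \ref{suslip1} applied directly to $\A=Tri(\A_{11},\A_{12},\A_{22})$ using $l.ann_{\A_{11}}\A_{12}=\{0\}$ and the fact that $\A_{22}$ is \textbf{SLIP} by the hypothesis at $i=2$. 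The main obstacle, such as it is, is bookkeeping: making sure the ``for some $2\le k\le n$'' quantifier survives correctly down the induction—i.e. that the annihilator condition is only needed at the very top level $Tri(\A_{11},E_{1}\A F,F\A F)$ and not inside $F\A F$, where Theorem \ref{gn1}'s argument (which does not need any annihilator condition once the relevant diagonal blocks are \textbf{SLIP}) does the work. One should be careful that the inductive hypothesis being invoked for $F\A F$ is that of Theorem \ref{gn1}, not of Theorem \ref{gn2}, since $F\A F$ has all its diagonal blocks $\A_{22},\ldots,\A_{nn}$ \textbf{SLIP} and all the requisite local-left-multiplier-into-$\A_{ki}$ conditions; no further annihilator hypothesis is available or needed there.
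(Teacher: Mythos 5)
Your proposal is correct and follows essentially the same route as the paper: peel off the first block via Remark \ref{tt} to get $\A\cong Tri(\A_{11},E_{1}\A F,F\A F)$, apply Theorem \ref{gn1} to conclude $F\A F$ is \textbf{SLIP}, reuse the Steps 1--4 argument to show local left multipliers $F\A F\to E_{1}\A F$ are left multipliers, deduce $l.ann_{\A_{11}}(E_{1}\A F)=\{0\}$ from the hypothesis on $\A_{1k}$, and finish with Theorem \ref{suslip1}. Your closing observation that the annihilator condition is needed only at the top level and that $F\A F$ is handled by Theorem \ref{gn1} rather than by an induction on the present statement matches the paper exactly.
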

\begin{proof}
Let $\A$ has a set of left triangulating idempotents  $\lbrace E_{1},\cdots, E_{n}\rbrace$. By Remark \ref{tt}, $\A\cong Tri(E_{1}\A E_{1}, E_{1}\A F, F\A F)$ and $\lbrace E_{2},\cdots, E_{n}\rbrace$ is a set of left triangulating idempotents of $F\A F$, where $F=I-E_{1}$. From hypothesis the algebra 
\[F\A F \cong \begin{pmatrix}
    \A_{22} & \cdots & \A_{2n} \\
   \vdots & \ddots  & \vdots \\
   0 & \cdots & \A_{nn}
\end{pmatrix}\]
satisfies in conditions of Theorem \ref{gn1}, and hence it is a \textbf{SLIP} algebra. A similar proof as that of Theorem \ref{gn1} shows that all local left multiplier from $F\A F$ into $E_{1}\A F$ is a left multiplier. Since for some $2\leq k \leq n$, $l.ann_{\A_{11}}\A_{1k}=\lbrace 0 \rbrace$, it follows that $l.ann_{\A_{11}} E_{1}\A F=\lbrace 0 \rbrace$. Now by Theorem \ref{suslip1}, $\A$ is a \textbf{SLIP} algebra.
\end{proof} 
\begin{thm}\label{gn3}
Let $\A$ has a set of left triangulating idempotents  $\lbrace E_{1},\cdots, E_{n}\rbrace$ with the generalized triangular matrix representation 
\[  \A = \begin{pmatrix}
  \A_{11} & \A_{12} & \cdots & \A_{1n} \\
  0 & \A_{22} & \cdots & \A_{2n} \\
  \vdots & \vdots & \ddots  & \vdots \\
  0 & 0 & \cdots & \A_{nn}
\end{pmatrix}.\]
Let $l.ann_{F\A F}F \A E_{n}=\lbrace 0 \rbrace$, where $F=I-E_{n}$ and any local left multiplier from $\A_{nn}$ into $\A_{kn}$ ($1\leq k \leq n$) is a left multiplier. Then $\A$ is a \textbf{SLIP} algebra.
\end{thm}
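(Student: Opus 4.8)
The plan is to mirror the structure of Theorems~\ref{gn1} and \ref{gn2}, but to split off the \emph{last} block rather than the first, so that the hypothesis $l.ann_{F\A F}F\A E_{n}=\{0\}$ can be fed into Theorem~\ref{suslip1}. By Remark~\ref{tt}, since $\{E_{1},\dots,E_{n}\}$ is a set of left triangulating idempotents and $E_{j}\A E_{i}=\{0\}$ for $i<j$, the idempotent $F=I-E_{n}$ lies in $\mathcal{S}_{l}(\A)$, and hence $\A\cong Tri(F\A F,\ F\A E_{n},\ E_{n}\A E_{n})$, with $\{E_{1},\dots,E_{n-1}\}$ a set of left triangulating idempotents of $F\A F$ and the identifications $F\A F\cong (\A_{ij})_{1\le i\le j\le n-1}$, $F\A E_{n}\cong (\A_{1n},\dots,\A_{n-1,n})^{t}$ (as an $(F\A F,\A_{nn})$-bimodule), and $E_{n}\A E_{n}\cong \A_{nn}$.

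Now I would apply Theorem~\ref{suslip1} to this triangular decomposition of $\A$ with $\mathcal{A}\rightsquigarrow F\A F$, $\M\rightsquigarrow F\A E_{n}$, $\B\rightsquigarrow E_{n}\A E_{n}\cong\A_{nn}$. There are three things to verify: (1) $l.ann_{F\A F}F\A E_{n}=\{0\}$, which is exactly the hypothesis; (2) $\A_{nn}$ is a \textbf{SLIP} algebra; and (3) every local left multiplier from $\A_{nn}$ into $F\A E_{n}$ is a left multiplier. For (2), the hypothesis says every local left multiplier from $\A_{nn}$ into $\A_{kn}$ is a left multiplier; taking $k=n$ gives that $\A_{nn}$ itself is \textbf{SLIP}. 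For (3), I would observe that $F\A E_{n}\cong\bigoplus_{k=1}^{n-1}\A_{kn}$ as a right $\A_{nn}$-module, and a local left multiplier $\psi:\A_{nn}\to\bigoplus_{k}\A_{kn}$ decomposes coordinatewise: writing $\psi(x)=\sum_{k}\psi_{k}(x)E_{1k}$-style, each $\psi_{k}:\A_{nn}\to\A_{kn}$ inherits the local left multiplier property (for $x\in\A_{nn}$, $\psi(x)=X_{x}x$ with $X_{x}=\sum x_{k}E_{kn}$, whence $\psi_{k}(x)=x_{k}x$). By hypothesis each $\psi_{k}$ is a left multiplier, $\psi_{k}(x)=\psi_{k}(1_{n})x$, and summing shows $\psi$ is a left multiplier with witness $\psi(1_{n})$. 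With (1),(2),(3) in hand, Theorem~\ref{suslip1} yields that $\A$ is a \textbf{SLIP} algebra.

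I expect the main technical point — and the only place any real work is needed — to be the coordinatewise splitting in step (3), together with the bookkeeping of the bimodule identification $F\A E_{n}\cong\bigoplus_{k=1}^{n-1}\A_{kn}$; this is entirely analogous to Steps~1--4 in the proof of Theorem~\ref{gn1}, where the same device was used to peel off one row of a block column, so it should go through with only notational changes. One caveat to watch: unlike Theorems~\ref{gn1} and \ref{gn2}, here I do \emph{not} claim (nor need) that the blocks $\A_{11},\dots,\A_{n-1,n-1}$ are \textbf{SLIP}; the $l.ann$ hypothesis on the module replaces that requirement precisely because Theorem~\ref{suslip1} trades the ``$\A$ is \textbf{SLIP}'' hypothesis of Theorem~\ref{suslip2} for a faithfulness hypothesis on $\M$. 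So no induction on $n$ is required at all — a single application of Remark~\ref{tt} plus Theorem~\ref{suslip1} suffices, which also explains why this theorem is stated separately from the induction-based Theorems~\ref{gn1} and \ref{gn2}.
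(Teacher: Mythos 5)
Your proposal is correct and follows essentially the same route as the paper: split off the last block via Remark~\ref{tt} to get $\A\cong Tri(F\A F, F\A E_{n}, E_{n}\A E_{n})$, check that $\A_{nn}$ is \textbf{SLIP} (the $k=n$ case of the hypothesis), show coordinatewise that local left multipliers $\A_{nn}\to F\A E_{n}$ are left multipliers, and invoke Theorem~\ref{suslip1} with the annihilator hypothesis. Your observation that no induction is needed matches the paper's proof exactly; the only blemish is the harmless notational slip $E_{1k}$ where $E_{kn}$ is meant, which you immediately correct.
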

\begin{proof}
We use the same notations as that in proof of Theorem \ref{gn1}. Let $F=I-E_{n}$. By Remark \ref{tt}, $\A\cong Tri(F\A F, F\A E_{n}, E_{n}\A  E_{n})$. In this case $E_{n}$ is the unity of $\A_{nn}$ and we have the $\rr$-algebra isomorphism:
\[E_{n}\A E_{n}\cong \A_{nn},\]
and the $(F\A F, \A_{nn})$-bimodule isomorphism:
\[F\A E_{n} \cong \begin{pmatrix}
\A_{1n}\\
\vdots \\
\A_{n-1,n}
\end{pmatrix}.\]
By hypothesis $\A_{nn}$ is a \textbf{SLIP} algebra. Let $\psi: \A_{nn} \rightarrow F\A E_{n}$ be a local left multiplier, we show that $\psi$ is a left multiplier. 
\par 
Since $\psi$ is an $\rr$-linear map, there exist $\rr$-linear maps $\phi_{n}^{k}:\A_{nn}\rightarrow \A_{kn}$ ($1\leq k \leq n-1$) such that
\[\psi(a_{nn}E_{n})=\sum_{k=1}^{n-1}\phi_{n}^{k}(a_{nn})E_{kn}.\]
For each $a_{nn}\in \A_{nn}$, let $T=a_{nn}E_{n}$. Since $\psi$ is local left multiplier, there is $X_{T}=\sum_{k=1}^{n-1}x_{kn}E_{kn}\in F\A E_{n}$ such that $\psi(T)=X_{T}T$. So 
\[ \sum_{k=1}^{n-1}\phi_{n}^{k}(a_{nn})E_{kn}=\sum_{k=1}^{n-1}x_{kn}a_{nn}E_{kn},\]
and hence any $\phi_{n}^{k}$ is a local left multiplier. From hypothesis each $\phi_{n}^{k}$ ($1\leq k \leq n-1$) satisfies
\[\phi_{n}^{k}(a_{nn})=\phi_{n}^{k}(1_{n})a_{nn} \quad (a_{nn} \in \A_{nn}).\]
So 
\[ \psi(a_{nn}E_{n})=\sum_{k=1}^{n-1}\phi_{n}^{k}(1_{n})a_{nn}E_{kn}=\psi(E_{n})a_{nn}E_{n},\]
for all $a_{nn} \in \A_{nn}$. So $\psi$ is a left multiplier. Now by hypothesis and Theorem \ref{suslip1}, it follows that $\A$ is a \textbf{SLIP} algebra.
\end{proof}
The following proposition shows that the condition $E_{n}\A E_{n}$ be a \textbf{SLIP} algebra is a necessary condition for the result that the algebra $\A$ with a set of left triangulating idempotents  $\lbrace E_{1},\cdots, E_{n}\rbrace$ be a \textbf{SLIP} algebra.
\begin{prop}\label{necgn}
If $\A$ is a \textbf{SLIP} algebra which has a set of left triangulating idempotents $\lbrace E_{1},\cdots, E_{n}\rbrace$, then $E_{n}\A E_{n}$ is a \textbf{SLIP} algebra.
\end{prop}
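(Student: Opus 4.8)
The plan is to reduce this to Theorem~\ref{nec} by exhibiting $E_{n}\A E_{n}$ as the lower right corner of a suitable triangular matrix representation of $\A$. First I would invoke Remark~\ref{tt}: since $\{E_{1},\dots,E_{n}\}$ is a set of left triangulating idempotents of $\A$, the idempotent $f:=1-E_{n}$ is left semicentral, i.e. $f\in\mathcal{S}_{l}(\A)$, and the $2$-by-$2$ triangular matrix representation induced by $f$ (as recalled in Section~2) reads
\[
\A\cong Tri(f\A f,\, f\A E_{n},\, E_{n}\A E_{n}),
\]
where the $(2,2)$-corner is precisely $E_{n}\A E_{n}$.

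Next I would observe that the \textbf{SLIP} property is invariant under $\rr$-algebra isomorphism: an isomorphism maps left ideals bijectively to left ideals, and $\psi\mapsto\theta\psi\theta^{-1}$ carries \textbf{LIP} maps to \textbf{LIP} maps and left multipliers to left multipliers. Hence from the hypothesis that $\A$ is \textbf{SLIP} we conclude that the triangular algebra $Tri(f\A f, f\A E_{n}, E_{n}\A E_{n})$ is \textbf{SLIP}.

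Finally I would apply Theorem~\ref{nec} to this triangular algebra with $\B=E_{n}\A E_{n}$ and $\M=f\A E_{n}$: that theorem says that whenever $Tri(\A,\M,\B)$ is a \textbf{SLIP} triangular algebra, the algebra $\B$ is \textbf{SLIP} (and, as a bonus, every local left multiplier from $\B$ into $\M$ is a left multiplier). Specializing gives that $E_{n}\A E_{n}$ is a \textbf{SLIP} algebra, as asserted.

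There is essentially no analytic obstacle here; the only point that needs care is the bookkeeping of which corner one lands in. Remark~\ref{tt} is arranged so that $f\A f$ sits in the upper left and $E_{n}\A E_{n}$ in the lower right, which is exactly the corner for which Theorem~\ref{nec} yields a conclusion — had one instead peeled off $E_{1}$ using $E_{1}\in\mathcal{S}_{l}(\A)$, then $E_{1}\A E_{1}$ would occupy the upper left corner and Theorem~\ref{nec} would say nothing about it. So the mild subtlety is simply to use the correct one of the two triangular decompositions recorded in Remark~\ref{tt}.
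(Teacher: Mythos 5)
Your proof is correct and follows the paper's own argument exactly: peel off $E_{n}$ via Remark~\ref{tt} to write $\A\cong Tri(F\A F, F\A E_{n}, E_{n}\A E_{n})$ with $F=I-E_{n}$, then apply Theorem~\ref{nec}. The extra remarks on isomorphism-invariance of the \textbf{SLIP} property and on choosing the correct corner are sensible but the route is the same.
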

\begin{proof}
Let $F=I-E_{n}$. By Remark \ref{tt}, $\A\cong Tri(F\A F, F\A E_{n}, E_{n}\A  E_{n})$. By Theorem \ref{nec}, it follows that $E_{n}\A E_{n}$ is a \textbf{SLIP} algebra.
\end{proof}
In continue we apply our results to block upper triangular matrix algebras. In order to prove Theorem \ref{bl}, we need the following lemma. 
\begin{lem}\label{lll}
Let $\A$ be a \textbf{SLIP} algebra and $M_{r\times s}(\A)$ be the set of all $r$-by-$s$ matrices over $\A$ as a right $\A$-module. Then any local left multiplier from $\A$ into $M_{r \times s}(\A)$ is a left multiplier.
\end{lem}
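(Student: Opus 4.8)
The plan is to reduce a local left multiplier $\psi:\A\to M_{r\times s}(\A)$ to a family of local left multipliers from $\A$ into $\A$ itself, and then invoke the \textbf{SLIP} hypothesis on $\A$ coordinatewise. Writing an element of $M_{r\times s}(\A)$ as a matrix $(x_{ij})$ with $1\le i\le r$, $1\le j\le s$ and $x_{ij}\in\A$, the right $\A$-module action is entrywise right multiplication. Since $\psi$ is $\rr$-linear into $M_{r\times s}(\A)$, we may write $\psi(a)=\big(\psi_{ij}(a)\big)_{i,j}$ for $\rr$-linear maps $\psi_{ij}:\A\to\A$.

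First I would check that each $\psi_{ij}$ is a local left multiplier. Fix $a\in\A$; by hypothesis there is $X_a=(x^a_{ij})\in M_{r\times s}(\A)$ with $\psi(a)=X_a a$, i.e.\ $\psi(a)=(x^a_{ij}a)_{i,j}$ after applying the (entrywise) right action. Comparing the $(i,j)$ entries gives $\psi_{ij}(a)=x^a_{ij}\,a$, so indeed $\psi_{ij}$ is a local left multiplier from $\A$ into $\A$. Since $\A$ is \textbf{SLIP}, each $\psi_{ij}$ is a left multiplier, so $\psi_{ij}(a)=\psi_{ij}(1)a$ for all $a\in\A$, where $1$ is the unity of $\A$.

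Finally I would reassemble: for all $a\in\A$,
\[
\psi(a)=\big(\psi_{ij}(a)\big)_{i,j}=\big(\psi_{ij}(1)\,a\big)_{i,j}=\big(\psi_{ij}(1)\big)_{i,j}\,a=\psi(1)\,a,
\]
which is exactly the statement that $\psi$ is a left multiplier (here $\psi(1)\in M_{r\times s}(\A)$ and the last product is the right $\A$-module action). This finishes the proof. The only point that needs any care is the very first one — keeping track of which "$1$" is being used (the unity of $\A$, which is \emph{not} the identity matrix of $M_{r\times s}(\A)$ unless $r=s$) and verifying that the right-action identity $\psi(a)=X_a a$ really does decouple into the $rs$ scalar identities $\psi_{ij}(a)=x^a_{ij}a$; once that decoupling is in place, the rest is a routine coordinatewise application of the \textbf{SLIP} property together with $\rr$-linearity. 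I do not expect any genuine obstacle here; the lemma is essentially bookkeeping and will be used to feed the block upper triangular case into Theorem~\ref{gn1} (and its relatives) in the proof of Theorem~\ref{bl}.
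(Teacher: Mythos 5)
Your proof is correct and follows essentially the same route as the paper: decompose $\psi$ entrywise into maps $\A\to\A$, check each entry map inherits the local left multiplier property, apply the \textbf{SLIP} hypothesis coordinatewise, and reassemble. No issues.
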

\begin{proof}
We use the same notations as that in proof of Theorem \ref{gn1}. Let $\psi: \A \rightarrow M_{r\times s}(\A)$ be a local left multiplier. Then there exist $\rr$-linear maps $\phi^{ij}:\A\rightarrow \A E_{ij}$ ($1\leq i\leq r, 1\leq j \leq s$) such that
\[\psi(a)=\sum_{i=1}^{r}\sum_{j=1}^{s}\phi^{ij}(a)E_{ij}.\]
Since $\psi$ is a local left multiplier, it is easily checked that each $\phi^{ij}$ is a local left multiplier. So by hypothesis and the fact that $\A E_{ij}\cong \A$ (as right $\A$-module) we have 
\[\phi^{ij}(a)=\phi^{ij}(1)aE_{ij},\]
for all $1\leq i\leq r, 1\leq j \leq s$. Thus $\psi(a)=\psi(1)a$ for all $a\in\A$ i.e., $\psi$ is a left multiplier.
\end{proof}
\begin{thm}\label{bl}
Let $B_{n}^{\bar{k}} (\A)$ ($n\geq 1$) be a block upper triangular matrix algebra, where $\bar{k} = (k_{1},\cdots, k_{m})\in \mathbb{N}^{m}$. Then we have the following.
\begin{enumerate}
\item[(i)] If $k_{m}\geq 2$, then $B_{n}^{\bar{k}} (\A)$ ($n\geq 2$) is a \textbf{SLIP} algebra.
\item[(ii)] If If $k_{m}=1$, then $B_{n}^{\bar{k}} (\A)$ is a \textbf{SLIP} algebra
if and only if $\A$ is a \textbf{SLIP} algebra.
\end{enumerate}
\end{thm}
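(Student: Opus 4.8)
The plan is to deduce both parts from the structural results of Section~4 together with Corollary~\ref{mn}. Recall that $B_{n}^{\bar{k}}(\A)$ has a set of left triangulating idempotents $\{F_{1},\ldots,F_{m}\}$ with $F_{j}B_{n}^{\bar{k}}(\A)F_{j}\cong M_{k_{j}}(\A)$, and the corresponding generalized triangular matrix representation has diagonal blocks $\A_{jj}\cong M_{k_{j}}(\A)$ and off-diagonal blocks $\A_{ij}\cong M_{k_{i}\times k_{j}}(\A)$. For part~(i), I would invoke Theorem~\ref{gn3} with the bottom idempotent $E_{n}$ replaced by $F_{m}$ (the statement of Theorem~\ref{gn3} applies verbatim with the set $\{F_{1},\ldots,F_{m}\}$ in place of $\{E_{1},\ldots,E_{n}\}$). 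The hypothesis $k_{m}\geq 2$ gives $\A_{mm}\cong M_{k_{m}}(\A)$ with $k_{m}\geq 2$, which by Corollary~\ref{mn}-(i) is a \textbf{SLIP} algebra for which every local left multiplier into any right module is a left multiplier; in particular every local left multiplier from $\A_{mm}$ into $\A_{km}\cong M_{k_{k}\times k_{m}}(\A)$ is a left multiplier. It remains to check $l.ann_{F\A F}F\A F_{m}=\{0\}$ where $F=I-F_{m}$; since $F\A F_{m}$ contains, as an $(F\A F)$-submodule, the block $\A_{m-1,m}\cong M_{k_{m-1}\times k_{m}}(\A)$ on which $\A_{m-1,m-1}\cong M_{k_{m-1}}(\A)$ acts faithfully, and more generally each $\A_{ii}$ acts faithfully on some column of $F\A F_{m}$, the left annihilator is zero. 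Hence Theorem~\ref{gn3} applies and $B_{n}^{\bar{k}}(\A)$ is \textbf{SLIP}. (When $m=1$ one has $B_{n}^{\bar{k}}(\A)=M_{n}(\A)$ with $n\geq 2$, handled directly by Corollary~\ref{mn}-(i).)

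For part~(ii), assume $k_{m}=1$, so $\A_{mm}\cong M_{1}(\A)\cong\A$. The ``if'' direction: suppose $\A$ is \textbf{SLIP}. Then each diagonal block $\A_{jj}\cong M_{k_{j}}(\A)$ is \textbf{SLIP} — for $k_{j}\geq 2$ by Corollary~\ref{mn}-(i), and for $k_{j}=1$ because $\A_{jj}\cong\A$. Moreover, by Lemma~\ref{lll}, since $\A_{jj}$ is \textbf{SLIP} (being either $M_{k_{j}}(\A)$ with the stronger module-free property of Corollary~\ref{mn}, or $\A$ itself), every local left multiplier from $\A_{jj}$ into $\A_{kj}\cong M_{k_{k}\times k_{j}}(\A)$ is a left multiplier: when $k_{j}\geq 2$ this is Corollary~\ref{mn}-(i) directly, and when $k_{j}=1$ it is exactly Lemma~\ref{lll} applied to $\A$ with $\A_{kj}\cong M_{k_{k}\times 1}(\A)$. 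Thus the hypotheses of Theorem~\ref{gn1} are satisfied (with the triangulating idempotents $\{F_{1},\ldots,F_{m}\}$), and $B_{n}^{\bar{k}}(\A)$ is \textbf{SLIP}.

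The ``only if'' direction of part~(ii) follows from Proposition~\ref{necgn}: if $B_{n}^{\bar{k}}(\A)$ is \textbf{SLIP}, then $F_{m}B_{n}^{\bar{k}}(\A)F_{m}\cong M_{k_{m}}(\A)\cong\A$ is \textbf{SLIP}. I expect the only mildly delicate point to be the faithfulness/annihilator verifications — confirming that $M_{k_{i}}(\A)$ acts faithfully on $M_{k_{i}\times k_{j}}(\A)$ and that the hypothesis ``every local left multiplier from $\A_{jj}$ into $\A_{kj}$ is a left multiplier'' really is covered by Lemma~\ref{lll} and Corollary~\ref{mn} in all cases $k_{j}=1$ and $k_{j}\geq 2$. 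These are routine matrix computations: $M_{r}(\A)$ acting by left multiplication on $M_{r\times s}(\A)$ has zero left annihilator because $\A$ is unital, so an element killing all of $M_{r\times s}(\A)$ kills every matrix unit and is therefore zero. Once these are in hand, the theorem is immediate from Theorems~\ref{gn1} and~\ref{gn3} and Proposition~\ref{necgn}.
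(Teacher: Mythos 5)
Your proposal is correct and follows essentially the same route as the paper: part (i) via Theorem~\ref{gn3} together with Corollary~\ref{mn}-(i) applied to $F_{m}B_{n}^{\bar{k}}(\A)F_{m}\cong M_{k_{m}}(\A)$, the ``only if'' half of (ii) via Proposition~\ref{necgn}, and the annihilator check being exactly the paper's observation that $l.ann_{M_{l}(\A)}M_{l\times k_{m}}(\A)=\lbrace 0\rbrace$ forces $l.ann_{F\A F}F\A F_{m}=\lbrace 0\rbrace$ (your preliminary remark about individual blocks acting faithfully on single columns would not by itself suffice, but the matrix-unit argument you then give is the right one). The only divergence is the ``if'' half of (ii): you invoke Theorem~\ref{gn1}, which obliges you to verify the local-left-multiplier hypothesis for every diagonal block (Corollary~\ref{mn}-(i) when $k_{j}\geq 2$, Lemma~\ref{lll} when $k_{j}=1$), whereas the paper applies Theorem~\ref{suslip1} (or \ref{gn3}) to the two-block splitting $Tri(F\A F, F\A F_{m}, \A)$ so that the annihilator condition lets it handle only the bottom block --- both verifications go through, yours being slightly longer but equally valid.
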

\begin{proof}
Let $\{F_{1},\ldots,F_{m}\}$ be a set of left triangulating idempotents of $B_{n}^{\bar{k}} (\A)$ such that $F_{1}=\sum_{i=1}^{k_{1}}E_{i}$ and $F_{j}=\sum_{i=1}^{k_{j}}E_{i+k_{1}+\cdots+k_{j-1}}$ for $2\leq j\leq m$. Suppose that $ F=I-F_{m}$ and $l=k_1+...+k_{m-1}$. Then $F B_{n}^{\bar{k}} (\A) F$ is a subalgebra of $M_{l}(\A)$ and $F B_{n}^{\bar{k}} (\A) F_{m}\cong M_{l\times k_{m}}$. Since $l.ann_{M_{l}(\A)}M_{l\times k_{m}}=\lbrace 0 \rbrace$, it follows that $l.ann_{F B_{n}^{\bar{k}} (\A) F}F B_{n}^{\bar{k}} (\A) F_{m}=\lbrace 0 \rbrace$.
\par 
$(i)$  We have $F_{m}B_{n}^{\bar{k}} (\A)F_{m}\cong M_{k_{m}}(\A)$ ($k_{m}\geq 2$). By Corollary \ref{mn}-$(i)$, any local left multiplier from $F_{m}B_{n}^{\bar{k}} (\A)F_{m}$ into any right $F_{m}B_{n}^{\bar{k}} (\A)F_{m}$-module is a left multiplier. From Theorem \ref{gn3} we see that $\A$ is \textbf{SLIP}.
\par 
$(ii)$ Let $B_{n}^{\bar{k}} (\A)$ be a \textbf{SLIP} algebra. By Proposition \ref{necgn}, $F_{m}B_{n}^{\bar{k}} (\A) F_{m}\cong M_{k_{m}}(\A)=\A$ (since $k_{m}=1$) is a \textbf{SLIP} algebra. 
\par
 Conversely, assume that $\A$ is a \textbf{SLIP} algebra. We have $F B_{n}^{\bar{k}} (\A) F_{m}\cong M_{l\times 1}$ (in this case $F_{m}=E_{n}$). Now by Lemma \ref{lll}, each of Theorems \ref{suslip1} and \ref{gn3} implies that $B_{n}^{\bar{k}} (\A)$ is a \textbf{SLIP} algebra.
\end{proof}
The $n$-by-$n$ upper triangular matrices $T_{n}(\A)$ ($n\geq 1$) is the block upper triangular matrix algebra $B_{n}^{\bar{k}} (\A)$, whenever $\bar{k} =(k_{1},\cdots, k_{n})\in \mathbb{N}^{n}$ with $k_{j}=1$ for any $1\leq j\leq n$. Therefore, by Theorem \ref{bl}, we obtain the following corollary.
\begin{cor}
The algebra of upper triangular matrices $T_{n}(\A)$ ($n\geq 1$) is a \textbf{SLIP} algebra if and only if $\A$ is a \textbf{SLIP} algebra.
\end{cor}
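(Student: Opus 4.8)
The plan is to recognize the corollary as nothing more than a specialization of Theorem~\ref{bl}. Recall from Section~2 that if one takes the composition vector $\bar{k}=(k_{1},\cdots,k_{n})\in\mathbb{N}^{n}$ with $k_{j}=1$ for every $1\leq j\leq n$, then $T_{n}(\A)=B_{n}^{\bar{k}}(\A)$, and $\{E_{1},\cdots,E_{n}\}$ is a set of left triangulating idempotents of $T_{n}(\A)$ with $\A\cong E_{j}\A E_{j}$ for each $j$. In the notation of Theorem~\ref{bl} this means $m=n$ and, crucially, $k_{m}=k_{n}=1$, so part $(ii)$ of that theorem applies verbatim: $T_{n}(\A)=B_{n}^{\bar{k}}(\A)$ is a \textbf{SLIP} algebra if and only if $\A$ is a \textbf{SLIP} algebra.

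I would then dispose of the degenerate case $n=1$ with one sentence: here $T_{1}(\A)$ is just $\A$ itself, so the asserted equivalence is a tautology; for $n\geq 2$ the statement is exactly Theorem~\ref{bl}$(ii)$ applied to the all-ones vector.

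I do not expect any genuine obstacle, since the entire content of the corollary is contained in Theorem~\ref{bl}; the only thing that must be verified is the identification $T_{n}(\A)=B_{n}^{\bar{k}}(\A)$ for $\bar{k}=(1,\cdots,1)$, which was already recorded in the preliminaries. If one preferred a self-contained derivation one could instead run the inductive argument of Theorems~\ref{gn1} and \ref{gn3} directly on the idempotents $\{E_{1},\cdots,E_{n}\}$: each diagonal block $E_{j}\A E_{j}$ is a copy of $\A$, the required passage from $\A$-valued local left multipliers on $\A$ to left multipliers is handled by Lemma~\ref{lll} with $r=s=1$ together with the hypothesis that $\A$ is \textbf{SLIP}, and the necessity of the condition on the bottom corner $E_{n}\A E_{n}\cong\A$ comes from Proposition~\ref{necgn}. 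But quoting Theorem~\ref{bl}$(ii)$ is the cleanest route and is the one I would write out.
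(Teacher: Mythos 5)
Your proposal is correct and is essentially identical to the paper's own derivation: the paper likewise observes that $T_{n}(\A)=B_{n}^{\bar{k}}(\A)$ for $\bar{k}=(1,\cdots,1)$, so that $k_{m}=1$ and Theorem~\ref{bl}$(ii)$ gives the equivalence immediately. Nothing further is needed.
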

Since each $\rr$-linear local left multiplier from $\rr$ into any $\rr$-module $\X$ is a left multiplier, from Theorem \ref{bl}, the next corollary is immediate.
\begin{cor}
The block upper triangular matrix algebra $B_{n}^{\bar{k}} (\rr)$ is a \textbf{SLIP} algebra for every $n\geq 1$. Especially, $T_{n}(\rr)$ ($n\geq 1$) is a \textbf{SLIP} algebra.
\end{cor}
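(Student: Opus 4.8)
The plan is to deduce this corollary directly from Theorem~\ref{bl}; the single preparatory observation needed is that $\rr$, regarded as an algebra over itself, is a \textbf{SLIP} algebra. First I would record this: by the remark immediately preceding the statement, every $\rr$-linear local left multiplier from $\rr$ into an arbitrary $\rr$-module is a left multiplier, and since a \textbf{LIP} map is precisely a local left multiplier, taking the module to be $\rr$ itself shows that every \textbf{LIP} map $\psi:\rr\rightarrow\rr$ is a left multiplier. Equivalently, $\rr$-linearity gives $\psi(a)=a\psi(1)$, which equals $\psi(1)a$ by commutativity of $\rr$; hence $\rr$ is \textbf{SLIP}.

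Next I would write $\bar{k}=(k_{1},\dots,k_{m})\in\mathbb{N}^{m}$ and run a short case analysis on the size $k_{m}$ of the last diagonal block. If $k_{m}\geq 2$, then automatically $n=k_{1}+\dots+k_{m}\geq 2$, so Theorem~\ref{bl}-$(i)$ applies verbatim and gives that $B_{n}^{\bar{k}}(\rr)$ is \textbf{SLIP}. If $k_{m}=1$, then Theorem~\ref{bl}-$(ii)$ states that $B_{n}^{\bar{k}}(\rr)$ is \textbf{SLIP} if and only if $\rr$ is \textbf{SLIP}, and the latter has just been verified; this case also subsumes the degenerate situation $n=1$, where necessarily $\bar{k}=(1)$ and $B_{1}^{\bar{k}}(\rr)\cong\rr$. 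Since these two cases exhaust all admissible $\bar{k}$, the conclusion follows for every $n\geq 1$.

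For the final sentence, I would recall from Section~2 that $T_{n}(\rr)$ is exactly the block upper triangular matrix algebra $B_{n}^{\bar{k}}(\rr)$ with $\bar{k}=(k_{1},\dots,k_{n})$ and $k_{j}=1$ for all $j$; in particular $k_{m}=k_{n}=1$, so the second case of the analysis applies and $T_{n}(\rr)$ is \textbf{SLIP}.

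I do not expect any genuine obstacle here: the argument is a routine case split feeding into Theorem~\ref{bl}, and the only point that really has to be checked---that $\rr$ is a \textbf{SLIP} algebra---is immediate from the commutativity of $\rr$.
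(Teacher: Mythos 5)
Your proposal is correct and follows the paper's own route: the paper likewise derives the corollary immediately from Theorem~\ref{bl} together with the observation (stated just before the corollary) that every $\rr$-linear local left multiplier out of $\rr$ is a left multiplier, which is exactly your verification that $\rr$ is \textbf{SLIP}. The explicit case split on $k_{m}$ is just an unpacking of the two parts of Theorem~\ref{bl} and introduces nothing different.
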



\bibliographystyle{amsplain}
\bibliography{xbib}

\end{document}